\newtheorem{theorem}{Theorem}[section]
\newtheorem{lemma}[theorem]{Lemma}
\newtheorem{definition}[theorem]{Definition}%[section]
\newtheorem{proposition}[theorem]{Proposition}
\newtheorem{remark}[theorem]{Remark}%[section]
\newtheorem{example}[theorem]{Example}%[section]
\newcommand{\C}{\mathbb{C}}
\DeclareMathOperator{\cod}{cod}
\DeclareMathOperator{\dom}{dom}
\DeclareMathOperator{\coker}{coker}
\author{A. P. Garr\~{a}o}
\address[Ana Paula Garr\~{a}o]{Faculdade de Ci\^{e}ncias e Tecnologia, Universidade dos A\c cores, Ponta Delgada,
Portugal}
\thanks{ }
\email{ana.po.garrao@uac.pt}
\author{N. Martins-Ferreira}
\address[Nelson Martins-Ferreira]{Instituto Polit\'{e}cnico de Leiria, Leiria, Portugal}
\thanks{ }
\email{martins.ferreira@ipleiria.pt}
\author{M. Raposo}
\address[Margarida Raposo]{Faculdade de Ci\^{e}ncias e Tecnologia, Universidade dos A\c cores, Ponta Delgada, Portugal}
\thanks{ }
\email{margarida.js.raposo@uac.pt}
\author{M. Sobral}
\address[Manuela Sobral]{CMUC and Departamento de
Matem\'atica, University of Coimbra, 3001--501 Coimbra,
Portugal}
\thanks{ }
\email{sobral@mat.uc.pt}
\title[]{Cancellative conjugation semigroups and monoids}
 \subjclass[2010]{20M07, 20M50, 18B40}
 \keywords{Admissibility diagrams, weakly Mal'tsev category, conjugation semigroups, internal monoid, internal groupoid}
\begin{document}

\begin{abstract}
We show that the category of cancellative conjugation semigroups is weakly Mal'tsev and give a characterization of all admissible diagrams there.
In the category of cancellative conjugation monoids we describe, for Schreier split epimorphisms with codomain $B$ and kernel $X$, all morphisms $h:X\rightarrow B$ which induce a reflexive graph, an internal category or an internal groupoid. We describe Schreier split epimorphisms in terms of external actions and consider the notions of precrossed semimodule, crossed semimodule and crossed module in the context of cancellative conjugation monoids. In this category we prove that a relative version of the so-called ``Smith is Huq" condition for Schreier split epimorphisms holds as well as other relative conditions.
\end{abstract}

\maketitle

\today

\section{Introduction}

In \cite{NMF.08a} the concept of weakly Mal'tsev category was introduced to provide a simple axiomatic context where the internal categories and
the internal groupoids are particularly simple to describe. The established notion of Mal'tsev category (\cite{CLP.91}) is too restrictive for this purpose
since there the two notions coincide. Amongst the categories that are weakly Mal'tsev but not Mal'tsev are the categories of distributive lattices
(\cite{NMF.12}) and the category of cancellative commutative monoids. In this paper we introduce another class of examples of such categories,
that includes the latter,
characterize there all the admissible diagrams and describe some internal structures. The admissibility of  certain type of diagrams is used to
go from local to global in a sense we make precise below.

We introduce the category of conjugation semigroups which can be seen as an abstraction of conjugation of complex numbers or of quaternions.
A conjugation semigroup $(S, \cdot, \overline{(\; )})$ is a semigroup $(S, \cdot)$
equipped with a unary operation
$\overline{(\; )}\colon S \rightarrow S$ satisfying the following identities:
 $x  \overline{x}= \overline{x} x$,
 $x  \overline{y}  y = y  \overline{y}  x$,
and  $ \overline{x  y}= \overline{y} \,  \overline{x}$.
The quasivariety of cancellative conjugation semigroups is a weakly Mal'tsev category and we present a characterization of all admissible diagrams in the sense of \cite{NMF.08}. We observe that, since they satisfy the Ore condition (\cite{OO.31}), all cancellative conjugation semigroups can be embedded in a group.

In the category of cancellative conjugation monoids we describe, for Schreier split epimorphisms  with codomain $B$ and kernel $X$
(a notion first introduced in \cite{FMS.13} for monoids with operations),
$$\xymatrix{ X \ar@<-2pt>[r]_k & A \ar@<-2pt>@{.>}[l]_{q_f}
\ar@<-2pt>[r]_f & B \ar@<-2pt>[l]_r },$$
all morphisms $h\colon X\rightarrow B$ which induce a reflexive graph, an internal category or an internal groupoid. That is, all morphisms
$h\colon X\rightarrow B$ that induce a morphism $\tilde{h}\colon A\rightarrow B$, with $\tilde{h} k = h$, such that
$\xymatrix{ A \ar@<-4pt>[r]_{\tilde{h}} \ar@<4pt>[r]^f & B \ar[l]|r }$
gives rise to a reflexive graph, an internal category or an internal groupoid.

We recall that, in a semi-abelian category $\mathbb{C}$ (\cite{JMT.02}), there is an equivalence between the category of split epimorphisms  \text{SplitEpi($\mathbb{C}$)}  and the category of internal actions \text{Act($\mathbb{C}$)} as introduced in \cite{BJ.98}. This equivalence induces further equivalences at the level of reflexive graphs and internal categories, as proved by  G. Janelidze (\cite{GJ.03}),  with the categories of precrossed modules and of crossed modules, respectively, in $\mathbb{C}$, as displayed in the following diagram.

\begin{equation}\nonumber
\xymatrix{\text{SplitEpi($\mathbb{C}$)} & \simeq & \text{Act($\mathbb{C}$)}\\
\text{RGraph($\mathbb{C}$)}\ar[u] & \simeq & \text{PrecrossedMod($\mathbb{C}$)}\ar[u]\\
\text{Cat($\mathbb{C}$)}\ar@{^{(}->}[u] & \simeq & \text{CrossedMod($\mathbb{C}$)}\ar@{^{(}->}[u]
}
\end{equation}

When $\mathbb{C}$ is the category of groups, all the categories on the right column correspond to the classical definitions of categories of group actions, precrossed and crossed modules, respectively, where the classical definition of  an action of a group  $B$ on a group $X$ is a group homomorphism $\varphi\colon{B\to \text{Aut($ X$)}}$. For a given action $\varphi$, denoted as $\varphi(b)(x)=b\cdot x$, a morphism $h\colon{X\to B}$  is part of a precrossed module structure $(B, X, \varphi, h)$ when for all $b\in B$ and $x\in X$, \begin{equation}
h(b\cdot x)=b+h(x)-b
\end{equation}
and $(B, X, \varphi, h)$  is a crossed module if, in addition,
\begin{equation}
h(x)\cdot y=x+y-x
\end{equation}
for every $x,y\in X$.

If  $\xymatrix{ X \ar@<0pt>[r]^k  & A
\ar@<-2pt>[r]_f & B \ar@<-2pt>[l]_r }$ is, up to isomorphism,  the split epimorphism associated with the action $\varphi$, then $h\colon{X\to B}$ induces a reflexive graph $\xymatrix{ A \ar@<-4pt>[r]_{\tilde{h}} \ar@<4pt>[r]^f & B \ar[l]|r }$
 if and only if $(B, X, \varphi, h)$ is a precrossed module. Moreover, $h$ induces an internal category if and only if $(B, X, \varphi, h)$ is a crossed module. Furthermore, in the category of groups, and more generally in any semi-abelian category, internal categories and internal groupoids are the same (see for example \cite{CLP.91} where this result is even proved for the more general case of Mal'tsev categories).

 In the context of  monoids (and more generally of monoids with operations) these equivalences do not necessarily hold. In these categories internal actions do not coincide, in general, with what we call external actions, that correspond to the classical notion of monoid action: an external action of a monoid $B$ on a monoid $X$ is a monoid homomorphism $\varphi\colon{B\to \text{End($X$)}}$ (See \cite{FMS.14}).

  A systematic study of the split epimorphisms that correspond to external actions of monoids and of monoids with operations, that were called Schreier split epimorphisms, was done in \cite{BFMS.14,BFMS.16,FMS.13} and it leads to the conclusion that these split epimorphisms still have many of the properties of the category of all split epimorphisms in a semi-abelian context.

 In the category of cancellative conjugation monoids, here simply denoted as $\mathcal{M}$, we consider an external action $\varphi$, represented as a Schreier split epimorphism $$\xymatrix{ X \ar@<-2pt>[r]_k & A \ar@<-2pt>@{.>}[l]_{q}
\ar@<-2pt>[r]_f & B \ar@<-2pt>[l]_r },$$ and a morphism $h\colon{X\to B}$. The pair $(\varphi,h)$ induces a reflexive graph, an internal category, or an internal groupoid (indeed, internal categories and internal groupoids are no longer the same) if and only if the following conditions are satisfied:
\begin{eqnarray}
h(b\cdot x)+b=b+h(x), & \text{for all $x\in X$, $b\in B$}\label{eq:C1}\\
h(y)\cdot x+y=y+x, &  \text{for all $x,y\in X$}\label{eq:C2}\\
\text{$X$ is a group and $-\overline{x}=\overline{(-x)}$}, & \text{for all $x\in X$.}\label{eq:C3}
\end{eqnarray}
More precisely, when condition (\ref{eq:C1}) holds, $(\varphi,h)$ induces a reflexive graph and we call it a precrossed semimodule. When conditions (\ref{eq:C1}) and $(\ref{eq:C2})$ hold, $(\varphi,h)$ induces an internal category and we say that it is a crossed semimodule. When conditions $(\ref{eq:C1})$, $(\ref{eq:C2})$ and $(\ref{eq:C3})$ hold, then $(\varphi,h)$ induces an internal groupoid and we call it a crossed module.

In other words we have:
\begin{equation}\nonumber
\xymatrix{\text{SchreierSplitEpi($\mathcal{M}$)} & \simeq & \text{ExtAct($\mathcal{M}$)}\\
\text{SchreierRGraph($\mathcal{M}$)}\ar[u] & \simeq & \text{PrecrossedSemiMod($\mathcal{M}$)}\ar[u]\\
\text{SchreierCat($\mathcal{M}$)}\ar@{^{(}->}[u] & \simeq & \text{CrossedSemiMod($\mathcal{M}$)}\ar@{^{(}->}[u]\\
\text{SchreierGrpd($\mathcal{M}$)}\ar@{^{(}->}[u] & \simeq & \text{CrossedMod($\mathcal{M}$)}\ar@{^{(}->}[u]
}
\end{equation}

\hfill

The external actions in $\mathcal{M}$ are described in Section  \ref{sec:5}.

\hfill

In the context of semi-abelian categories, it was made clear (e.g. \cite{FL.12}) that the so-called ``Smith is Huq" condition is precisely the one that allows a simple description of internal structures. The existence of this categorical equivalence in the context of cancellative conjugation monoids is also a consequence of a relative version of the ``Smith is Huq" condition that we describe here (Section \ref{sec:local to global}).

In order to better understand this phenomenon we observe the following. Proposition \ref{prop:5.4} shows that if condition $(\ref{eq:C1})$ holds true for a pair $(\varphi,h)$, then condition $(\ref{eq:C2})$ is equivalent to the existence of a multiplication on the reflexive graph induced by $(\varphi,h)$. The proof given in Proposition \ref{prop:5.4} follows a constructive method and it can be generalized to any weakly Mal'tsev category. However, in the particular case of cancellative conjugation monoids there is a deeper reason for that result to hold. This phenomenon is thoroughly examined in Section \ref{sec:local to global} where, among other things, we observe that condition $(\ref{eq:C2})$ follows from a specialised version of the ``Smith is Huq" condition (Proposition \ref{prop:7.4}).

%At the end we show that a relative version of the so-called ``Smith is Huq" condition holds for Schreier split epimorphisms in the category of cancellative conjugation monoids as well as other relative conditions.

\vspace*{0.3cm}
Throughout, for simplicity of exposition, we will use the additive notation for monoids and semigroups, although we do not assume commutativity.

\section{Preliminaries}

We recall here definitions and basic properties that will be used throughout.

\subsection{Weakly Mal'tsev categories}

In a category with pullbacks of split epimorphisms along split epimorphisms we consider the following diagram

%\begin{equation}%\label{local product diagram}
$$
\vcenter{
\xymatrix@=4em{ A \times_B C \ar@<-.5ex>[d]_{\pi_1}
\ar@<-.5ex>[r]_-{\pi_2} & C \ar@<-.5ex>[d]_g \ar@<-.5ex>[l]_-{e_2} \\
A \ar@<-.5ex>[u]_{e_1} \ar@<-.5ex>[r]_f & B \ar@<-.5ex>[u]_s
\ar@<-.5ex>[l]_r }
}
$$%\end{equation}
where  $f r=1_{B}=g s$, $(\pi_1,\pi_2)$ is a pullback of $(f,g)$ and $e_1= \langle 1_A, sf\rangle$, $e_2 = \langle  rg, 1_C\rangle$ are the morphisms induced by the universal property of the pullback. Then $e_1r=e_2s$.

Any diagram

\begin{equation}\label{admissibility diagram}
\vcenter{\xymatrix@=4em{A \ar@<.5ex>[r]^-{f} \ar[rd]_-{\alpha} & B
\ar@<.5ex>[l]^-{r}
\ar@<-.5ex>[r]_-{s}
\ar[d]^-{\beta} & C \ar@<-.5ex>[l]_-{g} \ar[ld]^-{\gamma}\\
& D}}
\end{equation}
with $f r=1_{B}=g s$ and $\alpha r=\beta=\gamma s$ will be called an \emph{admissibility diagram}. It induces a diagram

$$
\vcenter{\xymatrix@!0@=3em{ & C \ar@<.5ex>[ld]^-{e_2} \ar@<-.5ex>[rd]_-{g}
\ar@/^/[rrrd]^-{\gamma} \\
A\times_{B}C \ar@<.5ex>[ru]^-{\pi_2}
\ar@<-.5ex>[rd]_-{\pi_1} && B \ar@<.5ex>[ld]^-{r} \ar@<-.5ex>[lu]_-{s}
 \ar[rr]|-{\beta} && D.\\
& A \ar@<.5ex>[ru]^-{f} \ar@<-.5ex>[lu]_-{e_1} \ar@/_/[urrr]_-{\alpha}}}
$$
The existence of a unique morphism $\varphi\colon{A\times_{B}C \to D}$ such that $\varphi e_1=\alpha$ and $\varphi e_2=\gamma$ is a way to describe relevant situations and results in categorical algebra as we mention next.

\begin{definition}(\cite{NMF.08a}). The triple $(\alpha,\beta,\gamma)$ is admissible with respect to $(f,r,g,s)$ if there exists a unique morphism  $\varphi\colon{A\times_{B}C \to D}$ such that $\varphi e_1=\alpha$ and $\varphi e_2=\gamma$. Then we say that the diagram $(\ref{admissibility diagram})$ is admissible.
\end{definition}

The uniqueness of $\varphi$ is fundamental and it is achieved in the context of weakly Mal'tsev categories, a notion introduced in \cite{NMF.08a}. See also \cite{NMF.08}.

\begin{definition} A finitely complete category $\C$ is weakly Mal'tsev if the morphisms $e_1=\langle 1_A,sf\rangle$ and $e_2=\langle rq,1_C\rangle$ are jointly epimorphic.
\end{definition}

In a weakly Mal'tsev category, the morphism $\varphi$ is unique and so the admissibility of diagram $(\ref{admissibility diagram})$ is a condition and not an additional structure.

We recall that a finitely complete category is Mal'tsev if $(e_1,e_2)$ is a jointly strongly epimorphic pair  (\cite{DB.96}). Hence, all Mal'tsev categories are weakly Mal'tsev but the converse is false. For example, the category of distributive lattices is a weakly Mal'tsev category which is not Mal'tsev (\cite{NMF.12}).

In a weakly Mal'tsev category $\C$ the admissibility of diagrams like $(\ref{admissibility diagram})$ describes several conditions and properties of $\C$. For example, the reflexive graph
$$\xymatrix{ C_1 \ar@<-4pt>[r]_{c} \ar@<4pt>[r]^d & C_0 \ar[l]|e },\quad de=1_{C_0}=ce,$$
is an internal category in $\C$ if and only if the diagram

$$\vcenter{\xymatrix@=4em{C_1 \ar@<.5ex>[r]^-{d} \ar@{=}[rd] & C_0
\ar@<.5ex>[l]^-{e}
\ar@<-.5ex>[r]_-{e}
\ar[d]^-{e} & C_1 \ar@<-.5ex>[l]_-{c} \ar@{=}[ld]\\
& C_1}}$$
is admissible. Recall that  an internal category in $\C$ is a reflexive graph equipped with a morphism $m$ giving composition of arrows
 $$\xymatrix{C_2= C_1\times_{C_0} C_1 \ar[r]^-m  & C_1 \ar@<-4pt>[r]_{c} \ar@<4pt>[r]^d & C_0 \ar[l]|e }$$
  and satisfying associativity and identity axioms. It is an internal groupoid in $\C$ if (in addition) there exists a morphism $i\colon{C_1\to C_1}$ giving inverses for the composition $m$ (see e.g. \cite{BB.04}).

As proved in \cite{NMF.08}, an internal category is a groupoid if and only if the diagram
$$\vcenter{\xymatrix@=4em{C_2 \ar@<.5ex>[r]^-{m} \ar[rd]_-{\pi_1} & C_1
\ar@<.5ex>[l]^-{\langle 1_{C_1},ed\rangle}
\ar@<-.5ex>[r]_-{\langle ec,1_{C_1}\rangle}
\ar@{=}[d] & C_2 \ar@<-.5ex>[l]_-{m} \ar[ld]^-{\pi_2}\\
& C_1}}$$
is admissible. Further details on these characterizations can be found for example in \cite{NMF.VdL.14}.

A pair $(R,S)$ of equivalence relations on an object $X$
$$\vcenter{\xymatrix@=4em{R \ar@<1.0ex>[r]^-{r_1} \ar@<-1.0ex>[r]_-{r_2}  & X
\ar@<.0ex>[l]|-{i_R}
\ar@<-.0ex>[r]|-{i_S}
 & S \ar@<1.0ex>[l]^-{s_2} \ar@<-1.0ex>[l]_-{s_1} }}$$
commute in the sense of Smith-Pedicchio \cite{S.76,MCP.95}, if and only if the diagram
\[
\xymatrix@=4em{R \ar@<.5ex>[r]^-{r_2} \ar[rd]_-{r_1} & X
\ar@<.5ex>[l]^-{i_R}
\ar@<-.5ex>[r]_-{i_S}
\ar@{=}[d] & S \ar@<-.5ex>[l]_-{s_1} \ar[ld]^-{s_2}\\
& X}
\]
is admissible, that is to say if and only if there exists a morphism $\varphi\colon{R\times_{X}S \to X}$ such that $\varphi\langle 1_R, i_Sr_2\rangle=r_1$ and $\varphi\langle i_rs_1,1_S\rangle=s_2$.

A cospan
 \begin{equation*}
\xymatrix@=4em{K \ar@<.0ex>[r]^-{k}  & X
 & L \ar@<-.0ex>[l]_-{l} \\
 }
\end{equation*}
 of morphisms in a pointed category commutes, in the sense of Huq (\cite{SAH.68}), if and only if the diagram

 $$\vcenter{\xymatrix@=4em{K \ar@<.5ex>[r]^-{} \ar[rd]_-{k} & 0
\ar@<.5ex>[l]^-{}
\ar@<-.5ex>[r]_-{}
\ar[d]^-{} & L \ar@<-.5ex>[l]_-{} \ar[ld]^-{l}\\
& X}}$$
 is admissible. That is, there exists a unique morphism $\varphi:K\times L\rightarrow X$ such that $\varphi\langle 1,0\rangle=k$ and $\varphi\langle 0,1\rangle=l$.

When a pointed weakly Mal'tsev category exhibits a coincidence between the Smith-Pedicchio and the Huq commutators then we say that it satisfies the ``Smith is Huq" condition. This phenomenon is analysed in Section \ref{sec:local to global}.

 \vspace{2mm}

 In \cite{L.16} it is observed that the notion of admissibility of diagrams is indeed a commutativity condition.

 \vspace{2mm}

\subsection{Schreier split epimorphisms}

 It is well-known that in the category of groups there is an equivalence between the category of split epimorphisms with codomain $B$
 and kernel $X$ and the one of group actions of $B$ on $X$, that is group homomorphisms $\varphi\colon{B\rightarrow Aut(X)}$.

 This is not true in the category of monoids. There, the classical notion of monoid action, that is a monoid homomorphism $\varphi\colon{B\rightarrow End(X)}$, corresponds to a special class of split epimorphisms, called Schreier split epimorphisms, introduced in \cite{FMS.13}, in the more general context of monoids with operations, and studied in detail in \cite{BFMS.14} and \cite{BFMS.13}. The definition below was inspired by the one of Schreier internal categories given in \cite{P.98} by A. Patchkoria, in the category of monoids.

 \begin{definition}\label{Def:Shreier split epi} A Schreier split epimorphism in the category $Mon$ of monoids is a split epimorphism $(f,r)$, $fr=1_B$, with kernel $X$, for which there exists a unique set-theoretical map $q:A\rightarrow X$, called the Schreier retraction,

 $$\xymatrix{ X \ar@<-2pt>[r]_k  & A \ar@<-2pt>@{.>}[l]_{q}
\ar@<-2pt>[r]_f & B \ar@<-2pt>[l]_r }$$

 such that, for every $a\in A$, $a=kq(a)+rf(a)$.
 \end{definition}

 Equivalently, the following conditions should be satisfied

 \begin{enumerate}[(i)]
 \item  $a=kq(a)+rf(a)$ and
 \item $x=q(k(x)+r(b))$,
 \end{enumerate}
for all $a\in A$, $x\in X$ and $b\in B$, since (ii) gives the uniqueness of the set map $q$.

In the following proposition (proved in \cite{BFMS.13}) we list consequences of the definition that  will be used in the sequel.

\begin{proposition}\label{prop:2.4} Given a Schreier split epimorphism in the category $Mon$ of monoids
 $$\xymatrix{ X \ar@<-2pt>[r]_k  & A \ar@<-2pt>@{.>}[l]_{q}
\ar@<-2pt>[r]_f & B \ar@<-2pt>[l]_r }$$
we have that, for $a,a' \in A$, $x\in X$ and $b\in B$,
\begin{enumerate}[(a)]
  \item $qk=1_X$;
  \item $qr=0$;
  \item $q(0)=0$;
  \item $kq(r(b)+k(x))+r(b)=r(b)+k(x)$;
  \item $q(a+a')= q(a)+q(rf(a)+kq(a'))$;
  \item the sequence is exact, that is $k=\ker f$ and $f=\coker k$, and so we speak of Schreier split extension.

\end{enumerate}

\end{proposition}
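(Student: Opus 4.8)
The plan is to derive all six statements formally from the two defining conditions of a Schreier split epimorphism, namely (i) $a=kq(a)+rf(a)$ and (ii) $x=q(k(x)+r(b))$, together with the facts that $k$, $r$, $f$ are monoid homomorphisms, that $fr=1_B$, and that $k=\ker f$ (so $fk=0$ and $k(0)=0=r(0)$).

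First I would dispose of (a), (b), (c) by specialising (ii). Putting $b=0$ and using $r(0)=0$ gives $x=q(k(x))$, that is $qk=1_X$; putting $x=0$ and using $k(0)=0$ gives $0=q(r(b))$, that is $qr=0$; and (c) is the instance $b=0$ of (b) (equivalently the instance $x=0$ of (a)). For (d) I would apply (i) to the element $a=r(b)+k(x)$: since $f$ is a homomorphism with $fr=1_B$ and $fk=0$ we get $f(r(b)+k(x))=b$, hence $rf(r(b)+k(x))=r(b)$, and (i) reads $r(b)+k(x)=kq(r(b)+k(x))+r(b)$, which is exactly (d).

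The main point, and the step I expect to require the most care, is (e). I would set $z=rf(a)+kq(a')$; applying $f$ and using $frf=f$, $fk=0$ gives $f(z)=f(a)$, so (i) applied to $z$ yields $rf(a)+kq(a')=kq(z)+rf(a)$. Substituting this into the decomposition $a+a'=kq(a)+rf(a)+kq(a')+rf(a')$ and using $rf(a)+rf(a')=rf(a+a')$ gives
\[
a+a'=kq(a)+\bigl(kq(z)+rf(a)\bigr)+rf(a')=k\bigl(q(a)+q(z)\bigr)+r\bigl(f(a+a')\bigr).
\]
Now the uniqueness clause (ii) applies to this expression, forcing $q(a+a')=q(a)+q(z)=q(a)+q(rf(a)+kq(a'))$, which is (e).

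Finally, for (f): $k=\ker f$ holds by hypothesis, and $f$, being a split epimorphism, is a (regular) epimorphism. To see that $f=\coker k$, given $g\colon A\to C$ with $gk=0$ I would set $\bar g=gr$; then (i) gives $g(a)=g(kq(a))+g(rf(a))=\bar g f(a)$, so $\bar g f=g$, and $\bar g$ is the only such morphism since $f$ is epic ($\bar g=\bar g fr=gr$). Thus the sequence is exact and we have a Schreier split extension. The whole argument is essentially formal manipulation with the homomorphism identities $frf=f$, $fk=0$, $fr=1_B$; the only genuinely nontrivial choice is the auxiliary element $z$ in (e), after which everything reduces to an application of the uniqueness built into (ii).
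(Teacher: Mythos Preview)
Your argument is correct in every part: the specialisations of (ii) for (a)--(c), the application of (i) to $r(b)+k(x)$ for (d), the auxiliary element $z=rf(a)+kq(a')$ together with (ii) for (e), and the factorisation through $f$ via $\bar g=gr$ for (f) all go through exactly as you describe. The paper itself does not supply a proof of this proposition but simply cites \cite{BFMS.13}, so there is nothing to compare against; your self-contained derivation from conditions (i) and (ii) is precisely what one would expect and is, in fact, how the result is established in that reference.
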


To the Schreier split extension above corresponds an action $$\varphi\colon{B\to End(X)}$$ defined by
$$ \varphi(b)(x)=q(r(b)+k(x))$$
that we will denote by $b\cdot x$. Then, for example, condition (\textit{d}) can be written as
$$k(b\cdot x)+r(b)=r(b)+k(x).$$

In Section \ref{sec:5} we will describe Schreier split epimorphisms in the category of cancellative conjugation monoids.

\section{Cancellative conjugation semigroups}

In this section we introduce a subvariety of the variety of unary semigroups consisting of all semigroups equipped with a unary operation that we call conjugation. The main examples that illustrate our intuition behind the notion of conjugation are the semigroups of non-zero complex numbers and non-zero quaternions with the usual multiplication and conjugation. The operation of conjugation is considered for example in \cite{Por.69} as an anti-involution with certain desirable properties, which suggests its study as a unary operation. This study is also inspired by, but it is distinct from, the case of monoids with operations, since the unary operation does not preserve the semigroup addition (see condition (3), below).

\begin{definition}\label{def:3.1}
A conjugation semigroup $(S,+,\overline{( \; )})$ is a semigroup $(S,+)$ equipped with a unary operation $\overline{()}:S\rightarrow S$ satisfying the following identities:

\begin{enumerate}[(1)]
\item $\overline{x}+x=x+\overline{x}$
\item $x+\overline{y}+y=y+\overline{y}+x $
\item $\overline{(x+y)}=\overline{y}+\overline{x}$
\end{enumerate}

\end{definition}

Examples are groups with $\overline{x}=-x$, commutative monoids with $\overline{x}=0$ and commutative semigroups with $\overline{x}=x$.

The quasivariety $\mathcal{S}$ of conjugation semigroups satisfying the implications
\begin{enumerate}[(4)]
\item $x+\overline{a}+a=y+\overline{a}+a\Rightarrow x=y$
\end{enumerate}
is a weakly Malt'tsev category. Indeed, this subvariety has a ternary operation
$$p(x,y,z)=x+\overline{y}+z$$
satisfying the identity
$$p(x,y,y)=p(y,y,x)$$
and the quasi-identity
$$p(x,y,y)=p(x',y,y)\Rightarrow x=x'$$
and so it is weakly Mal'tsev (\cite{NMF.12}).

It is easy to prove that, in presence of (1)-(3), condition (4) is equivalent to cancellation. So, throughout, $\mathcal{S}$ will
denote the \emph{category of cancellative conjugation semigroups}. In the same way,  $\mathcal{M}$ will denote the category of \emph{cancellative conjugation monoids} (which is a non-full subcategory of $\mathcal{S}$).

Since the rules $\overline{x}=x$, $\overline{x}=0$, and $\overline{x}=-x$, define a conjugation on every commutative semigroup, commutative monoid, and group, respectively, and these conjugations are preserved by the homomorphisms of the corresponding categories, then
\begin{itemize}
\item  the category of cancellative commutative semigroups is isomorphic to a full subcategory of $\mathcal{S}$,
\item the category of cancellative commutative monoids is isomorphic to at least two full subcategories of $\mathcal{M}$ (where conjugations are defined by $\overline{x}=x$ and $\overline{x}=0$),
\item the category of groups is isomorphic to a full subcategory of $\mathcal{M}$ (where conjugation is defined by $\overline{x}=-x$),
%\item and the category of abelian groups is isomorphic to at least three full subcategories of  $\mathcal{M}$ (where conjugations are defined by $\overline{x}=x$, $\overline{x}=0$ and $\overline{x}=-x$).
\end{itemize}

 In abelian groups each of the three rules defines a conjugation. Moreover, it is clear that conjugations on cancellative commutative semigroups, or cancellative commutative monoids, are just the endomorphisms. Indeed, in the monoid case $\overline{0}=0$, which follows from cancellation and the fact that $\overline{0}=\overline{0+0}=\overline{0}+\overline{0}$.

The following are examples of cancellative conjugation semigroups that are neither groups nor monoids:

\begin{enumerate}[1.]
\item $S=\{u\in \mathbb{R} \mid 0<|u|<1\}$ with usual product and $\overline{u}=u$.

\item $S=\{z\in \mathbb{C} \mid 0<\|z\|<1\}$ with usual product and conjugation.

\item $S=\{q\in \mathbb{H} \mid 0<\|q\|<1\}$ with quaternion product and conjugation.
\end{enumerate}

% adding some details to the general example - version8
The previous examples are all special cases of the following more general construction.  Let $K$ be a field and suppose that $E$ is a vector space over $K$ in which there is defined, for every $u,v\in E$, a bilinear scalar product $u\cdot v\in K$ and a bilinear vector product $u\times v\in E$. If, in addition, the following two conditions are satisfied for all $u,v,w\in E$
\begin{eqnarray}
u\times(v\times w)-u(v\cdot w)=(u\times v)\times w - (u\cdot v)w\\
u\cdot(v\times w)=(u\times v)\cdot w
\end{eqnarray}
then the set $G=K\times E$ is a conjugation semigroup. Conjugation is defined as $\overline{(\alpha,u)}=(\alpha,-u)$ while the semigroup operation is given by
\[(\alpha,u)\oplus(\beta,v)=(\alpha\beta-u\cdot v,\alpha v+\beta u + u\times v)\]
 for every $\alpha,\beta\in K$ and $u,v\in E$. If, instead of $K$, we take $K^{*}$, the multiplicative group of non-zero scalars, then we obtain a cancellative conjugation semigroup, which is also a monoid, with $(1,0)$ as its identity element. The three examples shown above are particular examples of this more general construction with $K$ the field of real numbers and $E$ the euclidean space (with the usual scalar product) in dimension zero, one and three (with the usual vector product), and its appropriate subsets of unit length vectors.

 \vspace{2mm}

Cancellative semigroups are close to being groups. Indeed,
\begin{itemize}
\item finite semigroups with cancellation are groups;
\item a commutative semigroup can be embedded in an abelian group (i.e. it is isomorphic to a subsemigroup of a group) if and only if it is cancellative;
\item for non-commutative semigroups, cancellability is a necessary but not sufficient condition for embeddability in a group (see \cite{CP.61} and references there);
\end{itemize}

A semigroup $S$ satisfies the Ore condition if $aS \cap bS$ is not empty for all $a, b \in S$.

It is a well known result proved by  Ore in 1931  (\cite{OO.31}; another proof may be found in \cite{CP.61}, Chapter 1) that a cancellative semigroup satisfying the Ore condition can be embedded in a group.

 By 3.1(2), conjugation semigroups satisfy the Ore condition and so every cancellative conjugation semigroup  is  embeddable in a group.

\begin{remark} \emph{ From the definition of conjugation semigroup we obtain the equalities:}

\begin{enumerate}[(i)]
\item $\overline{(x+y)}+(x+y)=\overline{y}+y+x+\overline{x}$

\noindent{\emph{In fact} $\overline{(x+y)}+(x+y)=\overline{y}+\overline{x}+x+y=\overline{y}+x+\overline{x}+y=\overline{y}+y+\overline{x}+x=\overline{y}
+y+x+\overline{x}$.}

\item $x+y+\overline{y}=\overline{y}+y+x$

\end{enumerate}
\emph{that we will use in the sequel.}

\end{remark}

\section{Admissibility in $\mathcal{S}$}

In the weakly Mal'tsev category $\mathcal{S}$ of cancellative conjugation semigroups and semigroup homomorphisms preserving conjugation,
we are going to give a characterization of all admissible diagrams.

\begin{theorem}\label{thm:4.1}
 A diagram
$$
\vcenter{\xymatrix@=4em{A \ar@<.5ex>[r]^-{f} \ar[rd]_-{\alpha} & B
\ar@<.5ex>[l]^-{r}
\ar@<-.5ex>[r]_-{s}
\ar[d]^-{\beta} & C \ar@<-.5ex>[l]_-{g} \ar[ld]^-{\gamma}\\
& D}}
$$
in $\mathcal{S}$, with $fr=qs=1_B$ and $\alpha r=\beta =\gamma s$, is admissible if and only if the following conditions hold:

(Ad1) the equation
$$x+\overline{\beta(b)}+\beta(b)=\alpha(a)+\overline{\beta(b)}+\gamma(c)$$
has a unique solution  for all $a\in A$ and $c\in C$ such that $f(a)=g(c)=b\in B$.

(Ad2) the equation
$$\alpha(a_1+a_2)+\overline{\beta(b_1+b_2)}+\gamma(c_1+c_2)=\alpha(a_1)+\overline{\beta(b_1)}+\gamma(c_1)+\alpha(a_2)+\overline{\beta(b_2)}+\gamma(c_2)$$
 is satisfied for  $a_1,a_2\in A$ and $c_1,c_2\in C$ such that $f(a_1)=g(c_1)=b_1\in B$ and $f(a_2)=g(c_2)=b_2\in B$.

\end{theorem}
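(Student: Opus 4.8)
The plan is as follows. Since $\mathcal{S}$ is weakly Mal'tsev, a morphism $\varphi\colon A\times_B C\to D$ with $\varphi e_1=\alpha$ and $\varphi e_2=\gamma$ is unique if it exists, so only existence is at stake; here $e_1=\langle 1_A,sf\rangle$, $e_2=\langle rg,1_C\rangle$. I would first record some facts. Put $w(b)=e_1r(b)=e_2s(b)=\langle r(b),s(b)\rangle$ and, for an element $y$ of a conjugation semigroup, $n(y)=\overline{y}+y$. By (1) and (2) one has $n(y)=y+\overline{y}$ and $n(y)$ is central; hence $n(y_1+y_2)=n(y_1)+n(y_2)$. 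Since $r,s$ preserve conjugation, $\overline{w(b)}=\langle\overline{r(b)},\overline{s(b)}\rangle=w(\overline{b})=e_1r(\overline{b})$. Finally, a componentwise computation in $A\times_B C$ using (1) and (2) yields, whenever $f(a)=g(c)=b$,
\[
\langle a,c\rangle+\overline{w(b)}+w(b)=e_1(a)+\overline{w(b)}+e_2(c)=e_2(c)+\overline{w(b)}+e_1(a);
\]
by cancellation, the first equality characterizes $\langle a,c\rangle$ as the unique solution $z$ of $z+\overline{w(b)}+w(b)=e_1(a)+\overline{w(b)}+e_2(c)$.

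For the ``only if'' direction, suppose $\varphi$ exists. Then $\varphi(w(b))=\varphi(e_1r(b))=\alpha r(b)=\beta(b)$ and $\varphi(\overline{w(b)})=\varphi(e_1r(\overline{b}))=\beta(\overline{b})=\overline{\beta(b)}$, so applying $\varphi$ to the first equality above shows that $\varphi\langle a,c\rangle$ solves $z+\overline{\beta(b)}+\beta(b)=\alpha(a)+\overline{\beta(b)}+\gamma(c)$: existence of the solution is (Ad1), uniqueness is cancellation in $D$. For (Ad2), apply the homomorphism $\varphi$ to $\langle a_1,c_1\rangle+\langle a_2,c_2\rangle=\langle a_1+a_2,c_1+c_2\rangle$; rewriting $\alpha(a_i)+\overline{\beta(b_i)}+\gamma(c_i)=\varphi\langle a_i,c_i\rangle+n(\beta(b_i))$ from the equation just obtained, and using $n(\beta(b_1+b_2))=n(\beta(b_1))+n(\beta(b_2))$ together with centrality of $n$, one reads off exactly (Ad2).

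For the ``if'' direction, assume (Ad1) and (Ad2) and define $\varphi\langle a,c\rangle$ to be the unique $x\in D$ with $x+\overline{\beta(b)}+\beta(b)=\alpha(a)+\overline{\beta(b)}+\gamma(c)$, which is legitimate by (Ad1) and cancellation in $D$. Then $\varphi e_1=\alpha$ follows by cancellation (evaluate at $e_1(a)=\langle a,s(b)\rangle$ and use $\gamma s=\beta$), and $\varphi e_2=\gamma$ follows from (2) (evaluate at $e_2(c)=\langle r(b),c\rangle$ and use $\alpha r=\beta$). To prove $\varphi$ additive, check that $x_1+x_2$ satisfies the defining equation of $\varphi\langle a_1+a_2,c_1+c_2\rangle$: after substituting the defining equations of $x_1$ and $x_2$, the identity collapses to $n(\beta(b_1+b_2))=n(\beta(b_1))+n(\beta(b_2))$ and centrality of $n$, with (Ad2) providing the one non-formal step. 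Finally, $\varphi$ preserves conjugation: since $\varphi$ is additive and $\varphi(\overline{w(b)})=\alpha r(\overline{b})=\overline{\beta(b)}$, applying $\varphi$ to $e_1(a)+\overline{w(b)}+e_2(c)=e_2(c)+\overline{w(b)}+e_1(a)$ gives
\[
\alpha(a)+\overline{\beta(b)}+\gamma(c)=\gamma(c)+\overline{\beta(b)}+\alpha(a)
\]
for all $a,c$ over $b$; then conjugating the defining equation of $\varphi\langle a,c\rangle$, using centrality of $n$ and the last identity applied to the pair $(\overline{a},\overline{c})$, shows that $\overline{\varphi\langle a,c\rangle}$ solves the defining equation of $\varphi\langle\overline{a},\overline{c}\rangle$, so $\varphi\langle\overline{a},\overline{c}\rangle=\overline{\varphi\langle a,c\rangle}$ by cancellation. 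Hence $\varphi$ is a morphism of $\mathcal{S}$ realizing the diagram.

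The step I expect to be most delicate is the preservation of conjugation. It rests on the ``swap'' identity $e_1(a)+\overline{w(b)}+e_2(c)=e_2(c)+\overline{w(b)}+e_1(a)$ in $A\times_B C$ and on the easily overlooked fact that $\overline{w(b)}$ already lies in the image of $e_1r$, so that $\varphi(\overline{w(b)})$ is forced even before one knows that $\varphi$ respects conjugation; without this observation one seems to need an additional hypothesis. The additivity computation, though essentially routine, is the other place where (Ad2) is used in an essential way and must be carried out carefully.
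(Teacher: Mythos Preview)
Your proof is correct and follows essentially the same route as the paper's: in both directions the heart of the argument is the identity
\[
\langle a,c\rangle+\overline{w(b)}+w(b)=e_1(a)+\overline{w(b)}+e_2(c)=e_2(c)+\overline{w(b)}+e_1(a)
\]
in $A\times_B C$, together with the observation that $\overline{w(b)}=e_1r(\overline{b})$ so that $\varphi(\overline{w(b)})=\overline{\beta(b)}$ is forced by $\varphi e_1=\alpha$ alone. The paper carries out the same computations directly in $D$ (see the end of its proof, where it derives $\alpha(a)+\overline{\beta(b)}+\gamma(c)=\gamma(c)+\overline{\beta(b)}+\alpha(a)$ by exactly this pullback manipulation); your version simply packages them with the notation $w(b)$ and $n(y)$ and performs the manipulations upstairs before applying $\varphi$, which makes the structure of the argument more transparent but does not change its content.
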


\begin{proof}
If there exists a morphism
$\varphi\colon{A\times_{B}C \to D}$ in $\mathcal{S}$ such that $\varphi e_1=\alpha$ and $\varphi e_2=\gamma$ then
$$\alpha(a)=\varphi e_1(a)=\varphi(a,sf(a))=\varphi(a,s(b))$$
$$\gamma(c)=\varphi e_2(c)=\varphi(rg(c),c)=\varphi(r(b),c)$$
and  $\beta(b)=\varphi(r(b),s(b))$, for $f(a)=g(c)=b$.

Then  $\varphi(a,c)$ is a solution of $(Ad1)$:

$\varphi(a,c)+\overline{\beta(b)}+\beta(b)=\varphi(a,c)+\overline{\varphi(r(b),s(
b))}+ \varphi(r(b),s(b))$

$=\varphi(a,c)+\varphi(\overline{r(b)},\overline{s(b)} )+ \varphi(r(b),s(b))$

$=\varphi(a+\overline{r(b)}+r(b),c+\overline{s(b)}+s(b))=\varphi(a+\overline{r(b)}+r(b),s(b)+\overline{s(b)}+c)$

$=\varphi(a,s(b))+\varphi(\overline{r(b)},\overline{s(b)})+\varphi(r(b),c) =\alpha(a)+\overline{\beta(b)}+\gamma(c)
$

because $\varphi$ is a morphism of $\mathcal{S}$ and using Definition 3.1(2).

To prove $(Ad2)$ we use  the previous  result and Remark 3.2:

$\alpha(a_1+a_2)+\overline{\beta(b_1+b_2)}+\gamma(c_1+c_2)=$

$=\varphi(a_1+a_2,c_1+c_2)+\overline{\beta(b_1+b_2)}+\beta(b_1+b_2)$

$=\varphi(a_1,c_1)+\varphi(a_2,c_2)+\overline{(\beta(b_1)+\beta(b_2))}+\beta(b_1)+\beta(b_2)$

$
=\varphi(a_1,c_1)+\varphi(a_2,c_2)+\overline{\beta(b_2)}+\beta(b_2)+\beta(b_1)+\overline{\beta(b_1)}$

$=\varphi(a_1,c_1)+\overline{\beta(b_1)}+\beta(b_1)+\varphi(a_2,c_2)+\overline{\beta(b_2)}+\beta(b_2)$

$=\alpha(a_1)+\overline{\beta(b_1)}+\gamma(c_1)+\alpha(a_2)+\overline{\beta(b_2)}+\gamma(c_2)$

Conversely, let us assume that $(Ad1)$ and $(Ad2)$ hold.   Then we can define $\varphi(a,c)=x$ where $x$ is the  solution of the equation $(Ad1)$, such that $\varphi e_1=\alpha$ and $\varphi e_2=\gamma$. Indeed, $\varphi e_1(a)=\alpha(a)$, since $\varphi e_1(a)=\varphi(a,sf(a))$ satisfies the equation $(Ad1)$:

$\varphi(a,sf(a))+\overline{\beta f(a)}+ \beta f(a)=\alpha(a)+\overline{\beta f(a)}+\gamma sf(a)=$

$=\alpha(a)+\overline{\beta f(a)}+\beta f(a)$.

 By cancellation, we have that $\varphi(a,sf(a))=\varphi e_1(a)=\alpha (a)$.

Analogously,

$\varphi(rg(c),c)+\overline{\beta g(c)}+\beta g(c)=\alpha rg(c)+\overline{\beta g(c)}+\gamma(c)=$

$=\beta g(c)+\overline{\beta g(c)}+\gamma(c)=\gamma(c)+\overline{\beta g(c)}+\beta g(c)$.

So, $\varphi e_2(c)=\varphi(rg(c),c)=\gamma(c)$.

It remains to prove that $\varphi$ is a semigroup homomorphism that preserves  conjugation.

$\varphi(a_1,c_1)+\varphi(a_2,c_2)+\overline{\beta(b_1+b_2)}+\beta(b_1+b_2)=$

$=\varphi(a_1,c_1)+\varphi(a_2,c_2)+\overline{\beta(b_1)+\beta(b_2)}+\beta(b_1)+\beta(b_2)$

$=\varphi(a_1,c_1)+\varphi(a_2,c_2)+\overline{\beta(b_2)}+\beta(b_2)+\beta(b_1)+\overline{\beta(b_1)} $

$=\varphi(a_1,c_1)+\overline{\beta(b_1)}+\beta(b_1)+\varphi(a_2,c_2)+\overline{\beta(b_2)}+\beta(b_2)$

$=\alpha(a_1)+\overline{\beta(b_1)}+\gamma(c_1)+\alpha(a_2)+\overline{\beta(b_2)}+\gamma(c_2)$

$=\varphi(a_1+a_2,c_1+c_2)+\overline{\beta(b_1+b_2)}+\beta(b_1+b_2)$,

for $f(a_1)=g(c_1)=b_1$ and $f(a_2)=g(c_2)=b_2$.

Thus $\varphi(a_1+a_2,c_1+c_2)=\varphi(a_1,c_1)+\varphi(a_2,c_2)$.

Now we have that

$\varphi(\overline{a},\overline{c})+\overline{\beta(\overline{b})}+\beta(\overline{b})=\alpha(\overline{a})+\overline{\beta(\overline{b})}+\gamma(\overline{c})$,

but

$\overline{[\varphi(a,c)]}+\overline{\beta(\overline{b})}+\beta(\overline{b})= \overline{(\beta(b)+\beta(\overline{b})+\varphi(a,c))}
=\overline{(\varphi(a,c)+\beta(\overline{b})+\beta(b))}=$

$=\overline{(\alpha(a)+\beta(\overline{b})+\gamma(c))}%=\gamma(c')\beta(b'')\alpha(a')
=\gamma(\overline{c})+\overline{\beta(\overline{b})}+\alpha(\overline{a})$,

being $\alpha(a)+\overline{\beta(b)}+\gamma(c)=\gamma(c)+\overline{\beta(b)}+\alpha(a)$, for all $a,b$ and $c$ such that $f(a)=b=g(c)$, that we prove next, and so $\overline{\varphi(a,c)}=\varphi(\overline{a},\overline{c})$.

Indeed,

$\alpha(a)+\overline{\beta(b)}+\gamma(c)=\varphi(a,c)+\overline{\beta(b)}+\gamma(c)=$

$=\varphi(a,c)+\varphi(\overline{r(b)},\overline{s(b)})+\varphi(r(b),s(b))$

$=\varphi(a+\overline{r(b)}+r(b),c+\overline{s(b})+s(b))$

$=\varphi(r(b)+\overline{r(b)}+a,c+\overline{s(b)}+s(b))$

$=\varphi(r(b),c)+\varphi(\overline{r(b)},\overline{s(b}))+\varphi(a,s(b))$

$=\gamma(c)+\overline{\beta(b)}+\alpha(a)$,

and this concludes the proof.
\end{proof}

\section{Schreier split epimorphisms and external actions in $\mathcal{M}$}\label{sec:5}

 In this section we  consider the category $\mathcal{M}$ of cancellative conjugation monoids and monoid homomorphisms that preserve conjugation. This category may be seen as a subcategory of $\mathcal{S}$ and it is immediate to conclude that it is also weakly Mal'tsev. Moreover, the characterization given in Theorem \ref{thm:4.1}  is still valid for diagrams   $(\ref{admissibility diagram})$ in $\mathcal{M}$. To prove that, it is enough to show that if $(Ad1)$ and $(Ad2)$ hold then $\varphi(0,0)=0$.
This indeed is the case, since if

 $\varphi(0,0)+\overline{\beta(0)}+\beta(0)=\alpha(0)+\overline{\beta(0)}+\beta(0)$ in $\mathcal{S}$ then

 $\varphi(0,0)+\overline{\beta(0)}=0+\overline{\beta(0)}$

 because $\alpha(0)=\beta(0)=0$ and so, by cancellation $\varphi(0,0)=0$.

 \hfill

The definition  of Schreier split epimorphism in monoids (Definition \ref{Def:Shreier split epi}) can easily be extended  to the category $\mathcal{M}$. In this context the Schreier retraction $q$, displayed in the diagram
 $$\xymatrix{ X \ar@<-2pt>[r]_k  & A \ar@<-2pt>@{.>}[l]_{q}
\ar@<-2pt>[r]_f & B \ar@<-2pt>[l]_r }$$
interacts with conjugation in the following way:

 \begin{lemma} Given a Schreier split epimorphism in $\mathcal{M}$, the Schreier retraction satisfies the equality

 $$q(\overline{a})=f(\overline{a})\cdot \overline{q(a)}.$$
\end{lemma}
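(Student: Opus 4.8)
The plan is to start from the Schreier decomposition $a = kq(a) + rf(a)$, apply conjugation, and use the defining identity (3) of a conjugation semigroup, namely $\overline{x+y} = \overline{y} + \overline{x}$. This gives
$$\overline{a} = \overline{kq(a) + rf(a)} = \overline{rf(a)} + \overline{kq(a)} = r\bigl(\overline{f(a)}\bigr) + k\bigl(\overline{q(a)}\bigr),$$
where in the last step I use that $r$ and $k$ are morphisms in $\mathcal{M}$ and hence preserve conjugation. So $\overline{a}$ has been written in the form $r(b') + k(x')$ with $b' = \overline{f(a)}$ and $x' = \overline{q(a)}$, but the summands are in the ``wrong order'' compared to the Schreier decomposition $\overline{a} = kq(\overline{a}) + rf(\overline{a})$.

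The next step is to convert $r(b') + k(x')$ into the Schreier normal form $kq(\cdot) + rf(\cdot)$. This is exactly what the action $\varphi$ is designed to do: by Proposition~\ref{prop:2.4}(d), $k(b' \cdot x') + r(b') = r(b') + k(x')$, so
$$\overline{a} = k\bigl(b' \cdot x'\bigr) + r(b') = k\bigl(\overline{f(a)} \cdot \overline{q(a)}\bigr) + r\bigl(\overline{f(a)}\bigr).$$
Now I apply the Schreier retraction $q$ to both sides. Using condition (ii) of the definition of Schreier split epimorphism in the form $q(k(x) + r(b)) = x$ (equivalently Proposition~\ref{prop:2.4} together with the uniqueness of $q$), the right-hand side collapses to $\overline{f(a)} \cdot \overline{q(a)}$, while the left-hand side is $q(\overline{a})$. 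This yields $q(\overline{a}) = \overline{f(a)} \cdot \overline{q(a)}$, which in the dot-notation of the statement is precisely $q(\overline{a}) = f(\overline{a}) \cdot \overline{q(a)}$ once we note $\overline{f(a)} = f(\overline{a})$ (again because $f$ preserves conjugation). Strictly speaking I should also check that $\overline{f(a)}$ lies in $B$ and $\overline{q(a)}$ lies in $X$, which is automatic since $\overline{(\;)}$ is a unary operation on each object.

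The only point requiring a little care — and the one I expect to be the main (minor) obstacle — is the justification that $q(k(x) + r(b)) = x$ holds here: this is condition (ii) in the definition of a Schreier split epimorphism, so it is available, but one must make sure the element $k(b'\cdot x') + r(b')$ is genuinely of the form $k(x) + r(b)$ with $x \in X$, $b \in B$, which it visibly is with $x = b' \cdot x'$ and $b = b'$. Everything else is a routine chain of equalities using only (3), the fact that $k$, $r$, $f$ are conjugation-preserving homomorphisms, Proposition~\ref{prop:2.4}(d), and the characterizing property of $q$; no cancellation beyond what is already packaged into the Schreier machinery is needed.
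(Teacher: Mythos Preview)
Your proof is correct and follows essentially the same route as the paper: both arguments rest on the identity $\overline{a}=\overline{kq(a)+rf(a)}=rf(\overline{a})+k\overline{q(a)}$ and then extract $q(\overline{a})$ via the action. The only difference is cosmetic: the paper applies $q$ directly to $rf(\overline{a})+k\overline{q(a)}$ and invokes the definition $b\cdot x=q(r(b)+k(x))$, whereas you first rewrite this element into the form $k(b'\cdot x')+r(b')$ via Proposition~\ref{prop:2.4}(d) before applying condition~(ii); the two manoeuvres are equivalent.
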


 \begin{proof} We have that

 $f(\overline{a})\cdot \overline{q(a)}=q(rf(\overline{a})+k\overline{q(a)}=q(\overline{kq(a)+rf(a)})=q(\overline{a})$.

  \end{proof}

 So, together with Proposition \ref{prop:2.4} (a)-(e), this describes the behaviour of the Schreier retraction in $\mathcal{M}$.

 %Let $(f,r)$ be a Schreier split epimorphism in $\mathcal{M}$

% $$\xymatrix{ X \ar@<-2pt>[r]_k  & A \ar@<-2pt>@{.>}[l]_{q}
%\ar@<-2pt>[r]_f & B \ar@<-2pt>[l]_r }$$
%we are going to investigate which $\mathcal{M}$-morphisms $h:X\rightarrow B$ induce in $\mathcal{M}$

%\begin{enumerate}[(i)]
%\item an internal reflexive graph ($h$ is said to be a precrossed semimodule),
%\item an internal category ($h$ is said to be a crossed semimolude) or
%\item an internal groupoid ($h$ is said to be a crossed module),

%\end{enumerate}
%in the sense we make precise below. This will be investigatedin the following section, for the moment we will take %some time to describe the external actions in the category $\mathcal{M}$ of cancellative conjugation monoids.

Let $B$ and $X$ be two cancellative conjugation monoids. An external action of $B$ on $X$ is a classical action at the level of monoids, that is, a monoid homomorphism $\varphi\colon{B\to \text{End($X$)}}$ from the underlying monoid of $B$ into the monoid of endomorphisms of the underlying monoid of $X$, that we write as $\varphi(b)(x)=b\cdot x$.

In the category of monoids, for each Schreier split epimorphism $(f, r)$ with kernel $X$ and Schreier retraction $q$, there exists a commutative diagram
\[
\xymatrix{X\ar[r]^k \ar@{=}[d] & A\ar@<.5ex>[r]^{f}\ar@<-.5ex>[d]_{\alpha} & B\ar@<.5ex>[l]^{r}\ar@{=}[d]\\X \ar[r]_(.35){\langle 1,0\rangle} & X\rtimes_{\varphi} B \ar@<-.5ex>[u]_{\beta} \ar@<.5ex>[r]^(.6){\pi_B}  & B\ar@<.5ex>[l]^(.35){\langle 0,1 \rangle}}
\]
 where $\alpha$ is an isomorphism with inverse $\beta$ defined by $\alpha(a) = (q(a), f(a))$ and $\beta ((x, b)) =k(x) + r(b)$. Using this isomorphism, if $(f, r)$ is a Schreier split epimorphism in $\mathcal{M}$ we conclude that conjugation in the semidirect product $X \rtimes_{\varphi} B$ is given by
 $$\overline{(x, b)} = ( \overline{b}\cdot \overline{x}, \overline{b}).$$

 The question now is, for $B$ and $X$ in $\mathcal{M}$, given an external action of the underlying monoid of $B$ on the underlying monoid of $X$, $\varphi\colon{B\to \text{End($X$)}}$,
 when does the semidirect product defined above is also in $\mathcal{M}$.

 We observe that cancellation immediately follows from cancellation in $X$ and $B$. Indeed, if
 $(x, b) +(u, v) = (x', b') +(u, v)$, that is, if $(x + b \cdot u, b + v) = (x' + b' \cdot u, b' + v)$,
  cancellation in $B$ gives $b = b'$ and then cancellation in $X$ gives $x = x'$.

  We have to find equivalent conditions to the ones
  of the definition of conjugation monoid in terms of the action $\varphi$.

  It is easy to prove that (1), (2) and (3) in Definition \ref{def:3.1} are equivalent to each one of
  the following
three conditions, respectively,
\begin{eqnarray}
\overline{b}\cdot(\overline{x}+x)=x+(b+\overline{b})\cdot\overline{x}\\
x_1+(b_1+\overline{b_1})\cdot (\overline{x_1}+x_2)=x_2+(b_2+\overline{b_1})\cdot(\overline{x_1}+x_1)\\
(\overline{b_2}+\overline{b_1})\cdot(\overline{b_1\cdot x_2})=\overline{b_2}\cdot\overline{x_2}
\end{eqnarray}

This identifies the external actions in $\mathcal{M}$ for which we get the desired equivalence:

\begin{proposition}\label{prop:5.2} The category of external actions in $\mathcal{M}$ is equivalent to the category of Schreier split epimorphisms in $\mathcal{M}$.
 \end{proposition}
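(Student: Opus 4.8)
The plan is to show that the classical equivalence between external monoid actions and Schreier split epimorphisms in $Mon$ --- realised, as recalled above, by the mutually inverse monoid isomorphisms $\alpha(a)=(q(a),f(a))$ and $\beta((x,b))=k(x)+r(b)$ between $A$ and the semidirect product $X\rtimes_\varphi B$ --- restricts to $\mathcal{M}$. Explicitly, one defines a functor $\Phi$ sending an external action in $\mathcal{M}$, that is a monoid homomorphism $\varphi\colon B\to End(X)$ with $B,X\in\mathcal{M}$ satisfying conditions (8), (9) and (10), to the split epimorphism $\pi_B\colon X\rtimes_\varphi B\to B$ with section $\langle 0,1\rangle$, kernel $\langle 1,0\rangle$ and Schreier retraction $q(x,b)=x$, where $X\rtimes_\varphi B$ carries the conjugation $\overline{(x,b)}=(\overline{b}\cdot\overline{x},\overline{b})$; a morphism $(g,h)$ of external actions (so $g,h$ are morphisms of $\mathcal{M}$ with $h(b\cdot x)=g(b)\cdot h(x)$) is sent to the map $(x,b)\mapsto(h(x),g(b))$. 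In the other direction $\Psi$ sends a Schreier split epimorphism in $\mathcal{M}$ to the external action of $B$ on $X$ given by $b\cdot x=q(r(b)+k(x))$, and a morphism of such split epimorphisms to its restriction to kernels. Since the underlying construction already yields an equivalence in $Mon$, and every morphism of $\mathcal{M}$ is in particular a monoid homomorphism --- hence automatically commutes with the (unique) Schreier retractions --- it suffices to check that $\Phi$ and $\Psi$ land in the right categories and that the unit and counit coming from $Mon$ have $\mathcal{M}$-morphisms as components.

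For $\Phi$: cancellation in $X\rtimes_\varphi B$ has already been noted, and the conjugation axioms (1), (2), (3) of Definition~\ref{def:3.1} for the formula $\overline{(x,b)}=(\overline{b}\cdot\overline{x},\overline{b})$ are, respectively, precisely conditions (8), (9) and (10); hence $X\rtimes_\varphi B\in\mathcal{M}$. Brief computations then show that $\pi_B$, $\langle 0,1\rangle$ and $\langle 1,0\rangle$ preserve conjugation --- for $\langle 0,1\rangle$ one uses that each $\varphi(b)$ fixes $0$, and for $\langle 1,0\rangle$ that $\overline{0}=0$ in $B$ and $\varphi(0)=\mathrm{id}_X$ --- so $\Phi(\varphi)$ is a Schreier split epimorphism in $\mathcal{M}$; and $(x,b)\mapsto(h(x),g(b))$ preserves conjugation whenever $g,h$ do and $h(b\cdot x)=g(b)\cdot h(x)$, so $\Phi$ is well defined on morphisms as well.

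For $\Psi$: let $(k,f,r,q)$ be a Schreier split epimorphism in $\mathcal{M}$ and $\varphi$ the induced action. The key point is that the isomorphism $\alpha\colon A\to X\rtimes_\varphi B$ is already a morphism of $\mathcal{M}$: using the preceding Lemma, $q(\overline{a})=f(\overline{a})\cdot\overline{q(a)}=\overline{f(a)}\cdot\overline{q(a)}$, so $\alpha(\overline{a})=(q(\overline{a}),f(\overline{a}))=(\overline{f(a)}\cdot\overline{q(a)},\overline{f(a)})=\overline{(q(a),f(a))}=\overline{\alpha(a)}$. Therefore $\alpha$, and hence its inverse $\beta$, is an isomorphism in $\mathcal{M}$, so $X\rtimes_\varphi B$ is a cancellative conjugation monoid; consequently axioms (1), (2), (3) --- equivalently conditions (8), (9), (10) --- hold for $\varphi$, which is exactly the assertion that $\varphi$ is an external action in $\mathcal{M}$. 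Restriction to kernels carries a morphism of Schreier split epimorphisms in $\mathcal{M}$ to a pair of $\mathcal{M}$-morphisms (since $k$ preserves conjugation) compatible with the actions, so $\Psi$ is well defined.

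Finally, $\Psi\Phi$ is the identity: the action induced by $X\rtimes_\varphi B$ sends $(b,x)$ to $q((0,b)+(x,0))=q(b\cdot x,b)=b\cdot x$. And the isomorphisms $\alpha$ assemble into a natural isomorphism $\mathrm{Id}\Rightarrow\Phi\Psi$, naturality being inherited from $Mon$ while each component is, by the computation above, an isomorphism in $\mathcal{M}$. The step I expect to demand the most care is precisely the verification that $\alpha$ and $\beta$ respect conjugation, since this is where the semidirect-product conjugation formula $\overline{(x,b)}=(\overline{b}\cdot\overline{x},\overline{b})$ must be reconciled with the Lemma on $q(\overline{a})$; everything else is routine transport of the equivalence known in $Mon$.
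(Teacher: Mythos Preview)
Your proposal is correct and follows essentially the same approach as the paper: the paper gives no separate proof after the statement, the argument being the discussion immediately preceding it, which derives the semidirect-product conjugation formula $\overline{(x,b)}=(\overline{b}\cdot\overline{x},\overline{b})$ from the isomorphism $\alpha$ together with Lemma~5.1, and identifies conditions (8)--(10) as the translation of the conjugation axioms (1)--(3). You spell out the functorial aspects and the unit/counit of the equivalence more explicitly than the paper does, but the key ingredients --- the known equivalence in $Mon$, the conjugation formula on the semidirect product, and the use of Lemma~5.1 to see that $\alpha$ respects conjugation --- are exactly the same.
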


In the last section we illustrate the above conditions by constructing a semidirect product in the case of unit quaternions where conjugation coincides with inverses. This gives rise to the usual action by conjugation on the set of quaternions with length less or equal to one, but excluding zero in order to ensure cancellation. The same example is then used to construct an internal category which is not an internal groupoid in $\mathcal{M}$.

\section{Internal structures in $\mathcal{M}$}

In this section we are going to investigate which $\mathcal{M}$-morphisms $h\colon{X\rightarrow B}$, together with a Schreier split epimorphism $(f,r)$
 $$\xymatrix{ X \ar@<-2pt>[r]_k  & A \ar@<-2pt>@{.>}[l]_{q}
\ar@<-2pt>[r]_f & B \ar@<-2pt>[l]_r },$$
 with associated action $\varphi\colon{B\to End(X)}$, induce in $\mathcal{M}$:
\begin{enumerate}[(i)]
\item an internal reflexive graph (in which case $(\varphi, h)$ is said to be a precrossed semimodule),
\item an internal category (in which case $(\varphi, h)$ is said to be a crossed semimodule) or
\item an internal groupoid (then $(\varphi, h)$ is said to be a crossed module),
\end{enumerate}
in the sense we make precise below.

Given $h:X\rightarrow B$ and a Schreier split epimorphism $(f,r)$ in $\mathcal{M}$

 $$\xymatrix{ X \ar@<-2pt>[r]_k \ar@/^1pc/[rr]^{h}  & A \ar@<-2pt>@{.>}[l]|-{q}
\ar@<-2pt>[r]_f & B \ar@<-2pt>[l]|-r }$$
when does $h$ induce a reflexive graph in $\mathcal{M}$? That is, when does it induce an $\mathcal{M}$-morphism $\tilde{h}:A\rightarrow B$
such that

 \begin{center}
 $\tilde{h}k=h$ and $\tilde{h}r=1_B$?
 \end{center}

 If such an $\tilde{h}$ exists, since it is a morphism in $\mathcal{M}$ and $a=kq(a)+rf(a)$, we have that

 \begin{enumerate}[(1)]
 \item $\tilde{h}(a)=\tilde{h}(kq(a)+rf(a))=hq(a)+f(a)$;
 \item $\tilde{h}(\overline{a})=\overline{\tilde{h}(a)}$.
 \end{enumerate}

 \begin{remark}
\emph{   If $\tilde{h}$ exists it is defined by (1) and then (2) holds because }
   $$\tilde{h}(\overline{a})=\tilde{h}(\overline{kq(a)+rf(a)})=\tilde{h}(rf(\overline{a})+k\overline{q(a)})=f(\overline{a})+h\overline{q(a)}$$
 \emph{  and}
$$\overline{\tilde{h}(a)}=\overline{hq(a)+f(a)}=f(\overline{a})+h\overline{q(a)}.$$
 \end{remark}

 \begin{proposition}\label{prop:5.3} In the category $\mathcal{M}$, given a Schreier split epimorphism
  $$\xymatrix{ X \ar@<-2pt>[r]_k  & A \ar@<-2pt>@{.>}[l]_{q}
\ar@<-2pt>[r]_f & B \ar@<-2pt>[l]_r }$$
  a morphism $h:X\rightarrow B$ induces a reflexive graph
 $$\xymatrix{ A \ar@<-4pt>[r]_{\tilde{h}} \ar@<4pt>[r]^f & B \ar[l]|r }$$
if and only if it satisfies the condition

\begin{enumerate}[(C1)]
    \item $h(b\cdot x)+b=b+h(x)$, for all $b \in B$ and $x \in X$.
\end{enumerate}
\end{proposition}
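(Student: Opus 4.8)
The plan is to take the formula for $\tilde h$ that is forced by the Remark preceding the statement, namely $\tilde h(a) = hq(a) + f(a)$, and to show that condition (C1) is exactly what is needed for this formula to define a morphism in $\mathcal M$; the two implications of the proposition then fall out of one and the same computation read in opposite directions.

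For necessity, suppose $\tilde h\colon A\to B$ is an $\mathcal M$-morphism with $\tilde h k = h$ and $\tilde h r = 1_B$. Given $b\in B$ and $x\in X$, I would evaluate $\tilde h$ on the element $r(b)+k(x)\in A$ in two ways: directly it equals $\tilde h r(b)+\tilde h k(x) = b + h(x)$, while Proposition \ref{prop:2.4}(d) rewrites $r(b)+k(x)$ as $k(b\cdot x)+r(b)$, so it also equals $\tilde h k(b\cdot x)+\tilde h r(b) = h(b\cdot x)+b$. Comparing the two expressions gives (C1).

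For sufficiency, assume (C1) and set $\tilde h(a) = hq(a)+f(a)$. First I would record the easy identities: $\tilde h k = h$ from $qk = 1_X$ and $fk = 0$; $\tilde h r = 1_B$ from $qr = 0$, $h(0) = 0$ and $fr = 1_B$; and $\tilde h(0) = 0$ from $q(0) = 0$ (Proposition \ref{prop:2.4}(a)--(c)). The real content is additivity. Using Proposition \ref{prop:2.4}(e) and the definition $\varphi(b)(x) = q(r(b)+k(x)) = b\cdot x$ of the associated action, one gets $q(a+a') = q(a) + f(a)\cdot q(a')$, so
\[
\tilde h(a+a') = hq(a) + h\bigl(f(a)\cdot q(a')\bigr) + f(a) + f(a').
\]
After left-cancelling $hq(a)$, proving $\tilde h(a+a') = \tilde h(a)+\tilde h(a')$ reduces to $h\bigl(f(a)\cdot q(a')\bigr) + f(a) = f(a) + hq(a')$, which is precisely (C1) with $b = f(a)$, $x = q(a')$. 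It then remains to see that $\tilde h$ preserves conjugation, and this is already the computation in the Remark preceding the statement: it uses only that $\tilde h$ is a monoid homomorphism with $\tilde h k = h$ and $\tilde h r = 1_B$, together with conjugation-preservation of $k$, $r$, $f$ and $h$, so once additivity is known it is automatic. Alternatively one can obtain it directly from (C1) and the Lemma $q(\overline a) = f(\overline a)\cdot\overline{q(a)}$ of Section \ref{sec:5}.

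The hard part will be the additivity step, and within it the two points to be careful about are: identifying $q\bigl(rf(a)+kq(a')\bigr)$ with $f(a)\cdot q(a')$ through Proposition \ref{prop:2.4}(e) and the definition of $\varphi$, and checking that the cancellation invoked is two-sided — which is legitimate, since a cancellative monoid cancels on both sides. Everything else is a routine application of the already established properties of the Schreier retraction.
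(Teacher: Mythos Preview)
Your proposal is correct and follows essentially the same approach as the paper. The only differences are cosmetic: for necessity you work directly with the element $r(b)+k(x)$ for given $b,x$ (using Proposition~\ref{prop:2.4}(d)), whereas the paper starts from an arbitrary $a$ and sets $b=f(a)$, $x=q(a)$ before unwinding $rf(a)+kq(a)$; for sufficiency you reduce the additivity check to (C1) by explicit two-sided cancellation of $hq(a)$ and $f(a')$, while the paper simply applies (C1) to rewrite $\tilde h(a)+\tilde h(a')$ and compares. Both routes lead to the same computation, and your treatment of conjugation via the preceding Remark is exactly what the paper does.
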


 \begin{proof}
   If there exists $\tilde{h}$, such that $\tilde{h}k=h$ and $\tilde{h}r=1_B$ then, for $b=f(a)$ and $x=q(a)$, we have that

  $  \begin{array}{lll}
      b+h(x) & = & f(a)+hq(a) \\
       & = & \tilde{h}rf(a)+\tilde{h}kq(a)  \\
       & = & \tilde{h}(rf(a)+kq(a)) \\
       & = & \tilde{h}(kq(rf(a)+kq(a))+rf(rf(a)+kq(a))) \\
       & = & \tilde{h}kq(rf(a)+kq(a))+\tilde{h}rf(a) \\
       & = & hq(rf(a)+kq(a))+f(a) \\
       & = & h(b\cdot x)+b \\
    \end{array}$

Conversely, if $h$ satisfies $(C1)$ then we have to prove that the map defined by
$\tilde{h}(a)=hq(a)+f(a)$  preserves addition and conjugation.

\hfill

We have that

  \hfill

 $  \begin{array}{lll}
     \tilde{h}(a+a') & = & hq(a+a')+f(a+a') \\
      & = & h(q(a)+q(rf(a)+kq(a')))+f(a)+f(a'), \emph{(by \ 2.4 (e))}  \\
      & = & hq(a)+hq(rf(a)+kq(a'))+f(a)+f(a') \\
      & = & hq(a)+h(f(a)\cdot q(a'))+f(a)+f(a') \\
   \end{array}
$

 and

  \hfill

$
   \begin{array}{lll}
    \tilde{ h}(a)+\tilde{h}(a') & = & hq(a)+f(a)+hq(a')+f(a') \\
     \emph{(by \ (C1))} & = & hq(a)+h(f(a)\cdot q(a'))+f(a)+f(a'). \\
   \end{array}
 $

 \hfill

  \hfill

 Then also $\overline{\tilde{h}(a)}=\tilde{h}(\overline{a})$ (see Remark 6.1).
 \end{proof}

 We observe that   $\xymatrix{ A \ar@<-4pt>[r]_{\tilde{h}} \ar@<4pt>[r]^f & B \ar[l]|r }$ is called a Schreier reflexive graph in \cite{BFMS.13}, because $(f,r)$ is a Schreier split epimorphism.

 By Proposition 3.2.3 in \cite{BFMS.13}, a Schreier reflexive graph in the category $Mon$ of monoids,
$$\vcenter{\xymatrix@=4em{X \ar@<0.0ex>[r]_-{k}   & X_1
\ar@{-->}@<-1.0ex>[l]_-{q}
\ar@<1.0ex>[r]^-{d} \ar@<-1.0ex>[r]_-{c}
 & X_0 \ar@<0.0ex>[l]|-{s}  }}$$
 with $ds=cs=1_{X_0}$ and $(d,s)$ a Schreier split epimorphism in $Mon$, is an internal category if and only if the condition
 $$ (C) \ q(sc(x_1)+x)+x_1=x_1+x, $$
 for $x_1\in X_1$ and $x\in X=\ker(d)$, is satisfied. In this case the multiplication
$$\xymatrix{ X_1\times_{X_0} X_1 \ar[r]^-m  & X_1 }$$
is given by $m(x_1,x'_1)=kq(x_1)+x'_1$.

\hfill

Let us translate these conditions in terms of the Schreier reflexive graph

$$\xymatrix{ A \ar@<-4pt>[r]_{\tilde{h}} \ar@<4pt>[r]^f & B \ar[l]|r }$$

considering $f$ the domain and $\tilde{h}$ the codomain.

Then
 $$ \xymatrix{ X \ar[r]_(.4){k} & A \ar@<1ex>[r]^(.5){f} \ar@<-1ex>[r]_(.5){\tilde{h}} & B \ar@<-0pt>[l]|(.5){r} }$$
 is an internal category in $Mon$ if and only if the condition
 \begin{enumerate}%\label{eq:C}
 \item[(C)]  $kq(r\tilde{h}(a)+ k(x))+a=a+k(x)$
\end{enumerate}
 is satisfied for all $x\in X$ and $a\in A$. That is
 $$k(\tilde{h}(a)\cdot x)+a=a+k(x).$$
 Then for $f(a)=\tilde{h}(a')=hq(a')+f(a')$,
 $$m(a,a')=kq(a)+a',$$
 where $m:A\times_BA\rightarrow A$ is the monoid homomorphism that gives the admissibility of the diagram
$${\xymatrix@!0@=4em{A \ar@<.5ex>[r]^-{f} \ar@{=}[rd] & B
\ar@<.5ex>[l]^-{r}
\ar@<-.5ex>[r]_-{r}
\ar[d]^-{r} & A \ar@<-.5ex>[l]_-{\tilde{h}} \ar@{=}[ld]\\
& A}} $$
in the category of monoids.

The fact that $me_1(a)=m(a,rf(a))=a$ implies that

$$k(x)=m(k(x),rfk(x))=m(k(x),0).$$

Also $me_2(a)=m(r\tilde{h}(a),a)=a.$

Then, for $(a,a')$ such that $f(a)=\tilde{h}(a'),$

$
  \begin{array}{lll}
    m(a,a') & = & m(kq(a)+rf(a),a') \\
     & = & m((kq(a),0)+(rf(a),a')) \\
     & = & m(kq(a),0)+m(r\tilde{h}(a'),a') \\
     & = & kq(a)+a' .\\
  \end{array}
$

So a monoid homomorphism $m$ satisfying the prescribed conditions of admissibility has to be defined by $m(a,a')=kq(a)+a'$, for all $(a,a')\in A\times_BA$, and it is clear that $m(0,0)=kq(0)+0=0+0=0$.

\hfill

Let us show that the existence of such a morphism $m$ of monoids implies condition $(C)$: since

$$(r\tilde{h}(a),a)+(x,0)=(r\tilde{h}(a)+x,a)$$

then

$
  \begin{array}{lll}
    kq(r\tilde{h}(a)+k(x))+a & = & m(r\tilde{h}(a)+k(x),a) \\
     & = & m(r\tilde{h}(a),a)+m(k(x),0) \\
     & = & a+k(x) \\
  \end{array}
$

Conversely, if condition $(C)$ holds then

$
  \begin{array}{lll}
    m((a_1,a'_1)+(a_2,a'_2)) & = & m(a_1+a_2, a'_1+a'_2) \\
     & = & kq(a_1+a_2)+a'_1+a'_2. \\
  \end{array}
$

\hfill
$
  \begin{array}{lll}
    m(a_1,a'_1)+m(a_2,a'_2) & = & kq(a_1)+a'_1+kq(a_2)+a'_2 \\
     & = & kq(a_1)+k(r\tilde{h}(a'_1)\cdot q(a_2))+a'_1+a'_2, (by \ (C)) \\
     & = & k(q(a_1)+(rf(a_1)\cdot q(a_2))+a'_1+a'_2\\
     & = & kq(a_1+a_2)+a'_1+a'_2, (by \ 2.4 (e)) \\
  \end{array}
$

and so $m$ preserves addition.

\begin{proposition}\label{prop:5.4}
  In the category $\mathcal{M}$, given a Schreier split epimorphism
  $$\xymatrix{ X \ar@<-2pt>[r]_k  & A \ar@<-2pt>@{.>}[l]_{q}
\ar@<-2pt>[r]_f & B \ar@<-2pt>[l]_r }$$
a morphism $h:X\rightarrow B$ induces an internal category if and only if the following conditions hold:

 $(C_1) \ h(b\cdot x)+b=b+h(x), \ for \ all \ x\in X, b\in B$

 $(C_2) \ h(y)\cdot x+y=y+x,  \ for \ all \ x,y\in X$.

\end{proposition}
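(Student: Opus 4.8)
The plan is to reduce the statement to the groundwork already laid down. By Proposition~\ref{prop:5.3}, $(C_1)$ is precisely the condition under which $h$ induces the reflexive graph $\xymatrix{ A \ar@<-4pt>[r]_{\tilde{h}} \ar@<4pt>[r]^f & B \ar[l]|r }$ with $\tilde h(a)=hq(a)+f(a)$; if $(C_1)$ fails there is no reflexive graph, hence no internal category, so I may treat $(C_1)$ as a standing hypothesis and show that, under it, $(f,\tilde h,r)$ underlies an internal category in $\mathcal{M}$ if and only if $(C_2)$ holds. Since $\mathcal{M}$ is weakly Mal'tsev, by the characterization of \cite{NMF.08a,NMF.08} recalled in the Preliminaries this happens exactly when the diagram
$${\xymatrix@!0@=4em{A \ar@<.5ex>[r]^-{f} \ar@{=}[rd] & B
\ar@<.5ex>[l]^-{r}
\ar@<-.5ex>[r]_-{r}
\ar[d]^-{r} & A \ar@<-.5ex>[l]_-{\tilde{h}} \ar@{=}[ld]\\
& A}}$$
is admissible, and I would analyse this by applying Theorem~\ref{thm:4.1} with $\alpha=\gamma=1_A$ and $\beta=r$.

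The first observation is that for this diagram condition $(Ad1)$ is automatic in $\mathcal{M}$: given $a,c\in A$ with $f(a)=\tilde h(c)=b$, the element $x=kq(a)+c$ solves $x+\overline{r(b)}+r(b)=a+\overline{r(b)}+c$, because, writing $a=kq(a)+rf(a)$ and cancelling $kq(a)$ on the left, this reduces to $c+\overline{r(b)}+r(b)=r(b)+\overline{r(b)}+c$, which is identity~(2) of Definition~\ref{def:3.1}; uniqueness is cancellation. Thus the diagram is admissible if and only if $(Ad2)$ holds, and, exactly as in the proof of Theorem~\ref{thm:4.1}, $(Ad2)$ expresses that the induced map $m\colon A\times_B A\to A$, $m(a,a')=kq(a)+a'$, preserves addition (its compatibility with conjugation being, in that same proof, a consequence of $(Ad1)$ and $(Ad2)$, so it is not an extra requirement). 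In the paragraphs preceding the proposition, following \cite{BFMS.13}, this additivity of $m$ has already been identified with the condition $(C)$: $\,k(\tilde h(a)\cdot x)+a=a+k(x)$ for all $a\in A$ and $x\in X$.

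It then remains to verify that, under $(C_1)$, condition $(C)$ is equivalent to $(C_2)$; this is the only genuinely computational point. Writing $a=k(y)+r(b)$ with $y=q(a)$ and $b=f(a)$, one gets $\tilde h(a)=h(y)+b$, and since $\varphi\colon B\to\text{End}(X)$ is a monoid homomorphism, $\tilde h(a)\cdot x=h(y)\cdot(b\cdot x)$. Using Proposition~\ref{prop:2.4}(d) in the form $r(b)+k(x)=k(b\cdot x)+r(b)$, the injectivity of $k$ (which follows from Proposition~\ref{prop:2.4}(a)), and cancellation of $r(b)$, condition $(C)$ becomes $h(y)\cdot(b\cdot x)+y=y+b\cdot x$ for all $y,x\in X$ and $b\in B$; specialising $b=0$ yields $(C_2)$, and conversely $(C_2)$ with $x$ replaced by $b\cdot x$ gives it back. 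The chain ``internal category in $\mathcal{M}$ $\Leftrightarrow$ admissibility of the diagram $\Leftrightarrow (Ad2)$ $\Leftrightarrow (C)$ $\Leftrightarrow (C_2)$'' then completes the argument.

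The step I expect to require the most care is not a computation but the justification that admissibility of the displayed diagram \emph{in $\mathcal{M}$} really captures ``internal category in $\mathcal{M}$'' with nothing hidden — that is, that the composition $m$ produced by weak Mal'tsevness is a morphism of $\mathcal{M}$ (conjugation-preserving), not merely of monoids. Routing the argument through Theorem~\ref{thm:4.1} handles this cleanly, since its proof derives conjugation-compatibility of the mediating morphism from $(Ad1)$ and $(Ad2)$; the alternative, checking $\overline{m(a,a')}=m(\overline a,\overline{a'})$ directly via the Lemma $q(\overline a)=f(\overline a)\cdot\overline{q(a)}$, Proposition~\ref{prop:2.4}(d), identity~(3) of Definition~\ref{def:3.1} and $(C_2)$, works too but is more laborious.
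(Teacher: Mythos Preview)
Your argument is correct, and the equivalence $(C)\Leftrightarrow(C_2)$ is carried out essentially the same way as in the paper. The route, however, is genuinely different at the key step where one must pass from ``internal category in $Mon$'' to ``internal category in $\mathcal{M}$''. The paper first uses the $(C)$-criterion from \cite{BFMS.13} to obtain a monoid homomorphism $m(a,a')=kq(a)+a'$, and then verifies \emph{by hand} that $m$ preserves conjugation, via Lemma~5.1 and condition $(C)$: one computes $\overline{m(a,a')}=\overline{a'}+k\overline{q(a)}$, rewrites this using $(C)$ as $kq(r\tilde h(\overline{a'})+k\overline{q(a)})+\overline{a'}$, and then uses $f(\overline a)=\tilde h(\overline{a'})$ together with Lemma~5.1 to identify it with $kq(\overline a)+\overline{a'}=m(\overline a,\overline{a'})$. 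You instead route the whole question through Theorem~\ref{thm:4.1}: after checking that $(Ad1)$ is automatic (with solution $kq(a)+c$), admissibility in $\mathcal{M}$ reduces to $(Ad2)$, and conjugation-compatibility of the mediating morphism is already part of that theorem's conclusion. Your approach exploits the paper's own machinery more fully and avoids the separate conjugation computation; the paper's approach is more explicit and makes visible exactly where Lemma~5.1 enters. Both are valid, and your remark that the direct check ``works too but is more laborious'' is precisely what the paper chose to do.
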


\begin{proof}
  We first verify that $(C_2)\Leftrightarrow (C)$, so that, in presence of $(C_1)$, we can conclude that the reflexive graph

  $$\xymatrix{ A \ar@<-4pt>[r]_{\tilde{h}} \ar@<4pt>[r]^f & B \ar[l]|r }$$

  is an internal category in $Mon$.

  $(C)\Rightarrow (C_2)$

  If in $k(\tilde{h}(a)\cdot x)+a=a+k(x)$ we take $a=k(y)$, we get

  $$k(\tilde{h}(k(y))\cdot x)+k(y)=k(y)+k(x)$$

  and so, from

  $$k(\tilde{h}k(y)\cdot x+y)=k(y+x),$$

  we conclude that $\tilde{h}k(y)\cdot x+y=y+x,$ because $k$ is injective.

    $(C_2)\Rightarrow (C)$

  $
      \begin{array}{lll}
        k(\tilde{h}(a)\cdot x) +a & = & k( \tilde{h}(kq(a)+rf(a))\cdot x) +kq(a)+rf(a)\\
         & = & k(hq(a)+f(a))\cdot x) +kq(a)+rf(a) \\
         & = & k( hq(a)\cdot (f(a)\cdot x))+kq(a)+rf(a) \ (by \ (C_2)) \\
         & = & kq(a)+k(f(a)\cdot x)+rf(a) \\
         & = & kq(a)+kq(rf(a)+k(x)) +rf(a)\\
         & = & kq(a)+rf(a)+k(x), \ (by \ 2.4 (d)) \\
         & = & a+k(x). \\
      \end{array}$

  Consequently, if $(C_1)$ and $(C_2)$ are satisfied then $h$ induces an internal category in the category $Mon$ of monoids. And in $\mathcal{M}$? It remains to check that $m$ preserves conjugation: for $(a,a') \in A\times_BA$, $m(\overline{a},\overline{a'})=kq(\overline{a})+ \overline{a'}$ and

  \hfill

$
    \begin{array}{lll}
     \overline{ m(a,a')} & = & \overline{kq(a)+a'} \\
       & = & \overline{a'}+k\overline{q(a)} \\
       & = & kq(r\tilde{h}(\overline{a'})+k\overline{q(a)})+\overline{a'}, \ (by \ (C)) \\
       & = & kq(rf(\overline{a})+k\overline{q(a)})+ \overline{a'} \\
       & = & k(f(\overline{a})\cdot \overline{q(a)})+ \overline{a'} \\
       & = & kq(\overline{a})+ \overline{a'}, \ (by \ Lemma \ 5.1) \\
    \end{array}
 $

\end{proof}

\begin{proposition}\label{prop:5.5}
   In the category $\mathcal{M}$, given a Schreier split epimorphism
$$\xymatrix{ X \ar@<-2pt>[r]_k  & A \ar@<-2pt>@{.>}[l]_{q}
\ar@<-2pt>[r]_f & B \ar@<-2pt>[l]_r }$$
a morphism $h:X\rightarrow B$ induces an internal groupoid if and only if the following conditions hold:

 $(C_1) \ h(b\cdot x)+b=b+h(x), \ for \ all \ x\in X, b\in B$

 $(C_2) \ h(y)\cdot x+y=y+x, \ for \ all \ x,y\in X$

 $(C_3) \ X \ is \ a \ group \ and \ -\overline{x}=\overline{(-x)}.$

\end{proposition}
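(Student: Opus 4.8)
The plan is to reduce the whole statement to Proposition~\ref{prop:5.4} and then to produce the inversion morphism by hand. Since an internal groupoid is in particular an internal category, Proposition~\ref{prop:5.4} already shows that $(C_1)$ and $(C_2)$ are necessary and that, conversely, once $(C_1)$ and $(C_2)$ hold the morphism $h$ induces the internal category
$$\xymatrix{ X \ar[r]_(.4){k} & A \ar@<1ex>[r]^(.5){f} \ar@<-1ex>[r]_(.5){\tilde{h}} & B \ar@<-0pt>[l]|(.5){r} }$$
in $\mathcal{M}$, with composition $m(a,a')=kq(a)+a'$ on $A\times_B A$. So everything reduces to the following claim, which I would prove directly: \emph{assuming $(C_1)$ and $(C_2)$, this internal category admits an internal inversion $i\colon A\to A$ in $\mathcal{M}$ if and only if $(C_3)$ holds}. (Equivalently, by the characterization of internal groupoids recalled in the preliminaries, that the square built from $m$ and the pairing maps is admissible; but the direct route is shorter.) Recall that such an $i$ must satisfy $fi=\tilde{h}$, $\tilde{h}i=f$, $m(a,i(a))=r\tilde{h}(a)$ and $m(i(a),a)=rf(a)$ for all $a$.

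For necessity, I would suppose $i$ exists. From $m(i(a),a)=kq(i(a))+a=rf(a)$, substituting $a=kq(a)+rf(a)$ and right-cancelling $rf(a)$, one gets $k\bigl(q(i(a))+q(a)\bigr)=0$, hence $q(i(a))+q(a)=0$ in $X$; since $qk=1_X$ makes $q$ surjective, every element of $X$ has a left inverse, so $X$ is a group. Moreover $i(a)=kq(i(a))+rf(i(a))=k(-q(a))+r\tilde{h}(a)$, so $i$ is forced to be this map. Then I would evaluate conjugation-preservation of $i$ at $a=k(x)$: using $qk=1_X$, $\tilde{h}k=h$ and that $k,r,h$ preserve conjugation, this becomes $k(-\overline{x})+r(h(\overline{x}))=r(h(\overline{x}))+k(\overline{(-x)})$; by Proposition~\ref{prop:2.4}(d) and cancellation it turns into $-\overline{x}=h(\overline{x})\cdot\overline{(-x)}$, and combining with $(C_2)$ (which gives $h(\overline{x})\cdot\overline{(-x)}=\overline{x}+\overline{(-x)}-\overline{x}$) yields $\overline{(-x)}=-\overline{x}$. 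Hence $(C_3)$.

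For sufficiency, I would assume $(C_1),(C_2),(C_3)$ and set $i(a)=k(-q(a))+r\tilde{h}(a)$, legitimate because $X$ is a group. Using Proposition~\ref{prop:2.4} one checks the routine identities $i(0)=0$, $q(i(a))=-q(a)$, $f(i(a))=\tilde{h}(a)$ and $\tilde{h}(i(a))=f(a)$ (the last because $h(-q(a))+hq(a)=h(0)=0$), and then $m(a,i(a))=kq(a)+k(-q(a))+r\tilde{h}(a)=r\tilde{h}(a)$ and $m(i(a),a)=k(-q(a))+kq(a)+rf(a)=rf(a)$ directly from the formula for $m$. It then remains to see that $i$ is a morphism of $\mathcal{M}$. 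For additivity, using $q(a+a')=q(a)+f(a)\cdot q(a')$ (Proposition~\ref{prop:2.4}(e)), Proposition~\ref{prop:2.4}(d), cancellation, and that $\varphi(b)$ preserves inverses on the group $X$, the equality $i(a+a')=i(a)+i(a')$ reduces to
$$-(b\cdot y)-x=-x-\bigl(h(x)\cdot(b\cdot y)\bigr)\quad\text{in }X,\quad b=f(a),\ x=q(a),\ y=q(a'),$$
which follows from $(C_2)$ applied to $x$ and $-(b\cdot y)$. For conjugation, using Lemma~5.1 (i.e.\ $q(\overline{a})=f(\overline{a})\cdot\overline{q(a)}$), $\overline{f(a)}=f(\overline{a})$, $\tilde{h}(\overline{a})=\overline{\tilde{h}(a)}$, the conjugation axioms, Proposition~\ref{prop:2.4}(d) and cancellation, the equality $\overline{i(a)}=i(\overline{a})$ reduces to $-(\overline{f(a)}\cdot\overline{q(a)})=\overline{f(a)}\cdot\bigl(h(\overline{q(a)})\cdot\overline{(-q(a))}\bigr)$, which holds because $(C_2)$ together with the second clause of $(C_3)$ forces $h(\overline{q(a)})\cdot\overline{(-q(a))}=-\overline{q(a)}$. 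Thus $i\in\mathcal{M}$ is an internal inversion and the internal category is an internal groupoid.

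The main obstacle is this last step — checking that the forced candidate $i$ respects conjugation — since it is there that the non-commutativity of $X$ and the interaction between $\varphi$, $h$ and the conjugation all come into play, and where the precise role of $\overline{(-x)}=-\overline{x}$ becomes visible; isolating the right elementary identities in $X$ and matching them to $(C_2)$ and $(C_3)$ is the crux. I would also remark that, for $X\in\mathcal{M}$, the second clause of $(C_3)$ is in fact a consequence of the first — in a conjugation group $\overline{0}=0$, so $0=\overline{x+(-x)}=\overline{(-x)}+\overline{x}$ gives $\overline{(-x)}=-\overline{x}$ — so $(C_3)$ could be stated simply as ``$X$ is a group''; both clauses are kept because the verification above produces them as two separate constraints.
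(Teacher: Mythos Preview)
Your proof is correct and follows essentially the same strategy as the paper: reduce to Proposition~\ref{prop:5.4}, define the inversion explicitly as $i(a)=k(-q(a))+r\tilde{h}(a)$, verify the groupoid identities, and check that $i$ is a morphism in $\mathcal{M}$, with the converse obtained by evaluating conjugation-preservation of $i$ at $a=k(x)$. The only differences are that the paper offloads the monoid-level facts (that $X$ must be a group and that $i$ is a monoid homomorphism) to \cite{BFMS.13}, Proposition~3.3.2, whereas you derive them directly from cancellation and $(C_2)$; your closing remark that the clause $\overline{(-x)}=-\overline{x}$ is already a consequence of $X$ being a (conjugation) group is a correct and useful observation not made in the paper.
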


\begin{proof}
  By $(C_1)$ we have a reflexive graph in $\mathcal{M}$
   $$\xymatrix{ A \ar@<-4pt>[r]_{\tilde{h}} \ar@<4pt>[r]^f & B \ar[l]|r }$$
   with $\tilde{h}(a)=hq(a)+f(a)$. Then condition $(C_2)$ is equivalent to the fact that this reflexive graph
   is an internal category:
   $$\xymatrix{ A\times_{B} A \ar[r]^-m  & A  \ar@<-4pt>[r]_{\tilde{h}} \ar@<4pt>[r]^f & B \ar[l]|r }$$
   with $m(a,a')=kq(a)+a'$, for $f(a) =dom(a)$, the domain of a, and $\tilde{h}(a')= cod(a')$, the codomain of $a'$, and so $m(a,a')=a\circ a'.$

   By Proposition 3.3.2 in \cite{BFMS.13}, we know that this Schreier category is a groupoid in $Mon$ if and only if $X$ is a group.

   \hfill

   Let us analyse how the inverses are defined in the ``object of morphisms" $A$. For
   {\small \[\xymatrix{A\times_{B} A \ar[r]^(.65){m} & A
\ar@(ur,ul)[]_{t}
 \ar@<1ex>[r]^{f} \ar@<-1ex>[r]_{\tilde{h}} & B \ar[l]|{r} }
\]}
$t(a)$ is the inverse of $a$, i.e.
      \[ \xymatrix{  \dom(a)=f(a) \ar@<-2pt>[r]_(.5){a} & \cod(a)=\tilde{h}(a) \ar@<-2pt>[l]_(.5){t(a)} }
\]
and
  $t(a)\circ a=m(t(a),a)=1_{f(a)}=rf(a)$
  ,
  $a\circ t(a)=m(a,t(a))=1_{\tilde{h}(a)}=r\tilde{h}(a)$.

  We define $t$ by $t(a)=-kq(a)+r\tilde{h}(a)$. It has the right domain

$
    \begin{array}{lll}
      \dom(t(a)) & = & ft(a) \\
       & = & f(-kq(a)+r\tilde{h}(a)) \\
       & = & -fkq(a)+fr\tilde{h}(a) \\
       & = & 0+\tilde{h}(a)\\
       & = & \tilde{h}(a) \\
    \end{array}
$

and codomain

$
  \begin{array}{lll}
    \cod(t(a)) & = & \tilde{h}(-kq(a)+r\tilde{h}(a)) \\
     & = & -\tilde{h}kq(a)+\tilde{h}r\tilde{h}(a) \\
     & = & -hq(a)+\tilde{h}(a) \\
     & = & -hq(a)+hq(a)+f(a) \\
     & = & f(a). \\
  \end{array}
$

Now we prove that $t(a)$ is the inverse of $a$:

$
  \begin{array}{lll}
    m(a,t(a)) & = & m(a,-kq(a)+r\tilde{h}(a)) \\
     & = & kq(a)-kq(a)+r\tilde{h}(a) \\
     & = & r\tilde{h}(a)=1_{\tilde{h}(a)} \\
  \end{array}
$

and

$
  \begin{array}{lll}
    m(t(a),a) & = & m(-kq(a)+r\tilde{h}(a),a) \\
     & = & kq(-kq(a)+r\tilde{h}(a))+a \\
     & = & kq(k(-q(a))+r(hq(a)+f(a)))+a \\
     & = & k(-q(a))+a \\
     & = & -kq(a)+kq(a)+rf(a) \\
     & = & rf(a)=1_{f(a)} \\
  \end{array}
$

By 3.3.2 in \cite{BFMS.13}, we know that $t$ is a monoid homomorphism. We have to prove that $t$ preserves conjugation, so that it is a morphism of $\mathcal{M}$.

\hfill

We have that

$
  \begin{array}{lll}
    t(\overline{a}) & = & -kq(\overline{a})+r\tilde{h}(\overline{a}) \\
     & = & -kq(\overline{a})+r(hq(\overline{a})+f(\overline{a})) \\
     & = & -k(f(\overline{a})\cdot \overline{q(a)})+r(h(f(\overline{a})\cdot \overline{q(a)})+f(\overline{a})) \ (by \ Lemma \ 5.1) \\
     & = & k(f(\overline{a})\cdot (-\overline{q(a)})+r(f(\overline{a})+h\overline{q(a)}) \ (by \ (C_1)) \\
  \end{array}
$

\hfill

$
  \begin{array}{lll}
    \overline{t(a)} & = & \overline{-kq(a)+r(hq(a)+f(a))} \\
     & = & r(\overline{hq(a)+f(a)})+k(\overline{-q(a)}) \\
     & = & kq(r(\overline{hq(a)+f(a)}) +k(\overline{-q(a)}))+rf(r(\overline{hq(a)+f(a)})+k(\overline{-q(a)}))\\
     & = & k(\overline{hq(a)+f(a)})\cdot (\overline{-q(a)})+r(f(\overline{a})+h\overline{q(a)})) \\
     & = & k(f(\overline{a})\cdot (h\overline{q(a)}\cdot (\overline{-q(a)}))+r(f(\overline{a})+h\overline{q(a)})) \\
     & = & k(f(\overline{a})\cdot (\overline{-q(a)}))+r(f(\overline{a})+h\overline{q(a)}) \\
  \end{array}
$

because $h(\overline{x})\cdot (\overline{-x})=(\overline{-x})$.

Indeed,

 $ \begin{array}{lll}
    0 & = & \overline{x} +(-\overline{x})\\
     & = & \overline{x}+(\overline{-x})  \ (by \ (C_3))\\
     & = & h(\overline{x})\cdot (\overline{-x})+\overline{x} \ (by \ (C_2)) \\
  \end{array}
$

and, since $X$ is a group, $h(\overline{x})\cdot (\overline{-x})=-\overline{x}=(\overline{-x})$.

\hfill

Conversely, if
{\small \[\xymatrix{A\times_{B} A \ar[r]^(.65){m} & A
\ar@(ur,ul)[]_{t}
 \ar@<1ex>[r]^{f} \ar@<-1ex>[r]_{\tilde{h}} & B \ar[l]|{r} }
\]}
is a groupoid in $\mathcal{M}$, and so in $Mon$, we know that $X=\ker (f)$ is a group and that $t$ is defined by

$$t(a)=-kq(a)+r\tilde{h}(a)=-kq(a)+r(hq(a)+f(a)).$$

\hfill

Since $t(\overline{a})=\overline{t(a)}$, in particular, when $a=k(x)$,

\hfill

$t(\overline{k(x)})=-k(\overline{x})+rh(\overline{x}),$

\hfill

$\overline{t(k(x))}=k(\overline{-x})+rh(\overline{x})$

and $t(\overline{k(x)})=\overline{t(k(x))}$ implies that $-k(\overline{x})=k(\overline{-x})$, that is $k(-\overline{x})=k(\overline{-x})$ and so $-\overline{x}=\overline{-x}.$
\end{proof}

\section{From local to global}\label{sec:local to global}

To study internal structures in several contexts (protomodular, homological, semi-abelian categories, see \cite{BB.04}) as well as to obtain there strong properties (see \cite{FL.12} and \cite{FL.15}) it is fundamental that they satisfy the so-called ``Smith is Huq" condition: any two equivalent relations on an object centralize each other, or commute, in the sense of Smith-Pedicchio (see \cite{S.76} and \cite{MCP.95}), if and only if their normalizations commute  in the sense of Huq (\cite{SAH.68}).

Our goal is to show that the category $\mathcal{M}$ of cancellative conjugation monoids satisfies the
``Smith is Huq" condition with respect to Schreier equivalence relations, that is equivalence relations
$$\xymatrix{ R \ar@<-1ex>[r]_{r_2} \ar@<1ex>[r]^{r_1} & X \ar[l]|{i_R} }$$
where $(r_1,i_R)$, and consequently $(r_2,i_R)$, is a Schreier split epimorphism.

\begin{proposition}
  Consider the following diagram in $\mathcal{M}$
  \[ \xymatrix{ & & Y \ar@<-2pt>[d]_l & \\
& A \times_B C \ar@<-2pt>[d]_{p_1} \ar@<-2pt>[r]_-{p_2}
& C \ar@<-2pt>[l]_-{e_2} \ar@<-2pt>[d]_g \ar@<-2pt>@{.>}[u]_{q_g} \ar@/^/[ddr]^{\gamma} \\
X \ar@<-2pt>[r]_k & A \ar@<-2pt>[u]_{e_1} \ar@<-2pt>[r]_f
\ar@<-2pt>@{.>}[l]_{q_f} \ar@/_/[drr]_{\alpha} & B \ar@<-2pt>[l]_r
\ar@<-2pt>[u]_s \ar[dr]^{\beta} & \\
& & & D, } \]
with $(f,r)$ and $(g,s)$ Schreier split epimorphisms with kernels $X$ an $Y$, respectively, and $\alpha r=\beta=\gamma s$.

If $\alpha k$ and $\gamma l$ commute then there exists a unique morphism $\varphi\colon A\times_BC\rightarrow D$ such that $\varphi e_1=\alpha$ and $\varphi e_2=\gamma$.
\end{proposition}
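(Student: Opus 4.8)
The plan is to exploit the Schreier structure to reduce the problem to an instance of Theorem~\ref{thm:4.1}. The key point is that for a Schreier split epimorphism every element of $A$ decomposes uniquely as $a = kq_f(a) + rf(a)$, so a candidate $\varphi$ is forced: given $(a,c) \in A\times_B C$ with $f(a)=g(c)=b$, we must have $\varphi(a,c) = \varphi(e_1(a) \text{ ``times'' } e_2(c))$ in the sense dictated by the reflexive-graph structure, and using $e_1(kq_f(a)+rb) = $ (split-epi decomposition in $A\times_B C$) one is led to the explicit formula $\varphi(a,c) = (\alpha k)(q_f(a)) + \beta(b) + (\gamma l)(q_g(c))$ — or, written more symmetrically, $\varphi(a,c)$ is the element obtained by inserting the two normalized pieces $\alpha k q_f(a)$ and $\gamma l q_g(c)$ around $\beta(b)$. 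Uniqueness of $\varphi$ is then immediate (it holds in any weakly Mal'tsev category, and $\mathcal M$ is weakly Mal'tsev), so the entire content of the statement is \emph{existence}, i.e.\ that this formula defines an $\mathcal M$-morphism satisfying $\varphi e_1 = \alpha$ and $\varphi e_2 = \gamma$.

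First I would record the Schreier bookkeeping: the pullback $A\times_B C$ is itself a Schreier split extension, with kernel $X\times Y$ (or rather the appropriate pullback kernel), and $e_1$, $e_2$ are the coproduct-like injections; I would write down the Schreier retraction of $p_1$ (resp.\ $p_2$) explicitly in terms of $q_f$ and $q_g$, using Proposition~\ref{prop:2.4}(e) to control how $q$ interacts with sums. Next, I would use the hypothesis that $\alpha k$ and $\gamma l$ commute in the sense of Huq: this gives a morphism $\psi\colon X\times Y \to D$ with $\psi\langle 1,0\rangle = \alpha k$ and $\psi\langle 0,1\rangle = \gamma l$, and in particular $(\alpha k)(x) + (\gamma l)(y) = (\gamma l)(y) + (\alpha k)(x) = \psi(x,y)$ for all $x\in X$, $y\in Y$. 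Define $\varphi(a,c) := \psi\big(q_f(a),\, q_g(c)\big) + \beta\big(f(a)\big)$, where I use $f(a)=g(c)$. The conditions $\varphi e_1 = \alpha$ and $\varphi e_2 = \gamma$ follow from $q_f k = 1$, $q_f r = 0$, $q_g s = 0$ (Proposition~\ref{prop:2.4}(a),(b)) together with $\alpha r = \beta = \gamma s$: e.g.\ $\varphi e_1(a) = \varphi(a, sf(a)) = \psi(q_f(a), q_g s f(a)) + \beta f(a) = \psi(q_f(a),0) + \beta f(a) = (\alpha k)q_f(a) + \alpha r f(a) = \alpha(kq_f(a) + rf(a)) = \alpha(a)$.

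The main obstacle — and the place where the commutativity hypothesis does real work beyond merely producing $\psi$ — is verifying that $\varphi$ is a semigroup homomorphism (that it preserves $0$ is trivial, and conjugation is handled afterwards much as in the proof of Theorem~\ref{thm:4.1}, using identity~3.1(2) and Remark~3.2). Here one must expand $\varphi\big((a_1,c_1)+(a_2,c_2)\big) = \psi\big(q_f(a_1+a_2), q_g(c_1+c_2)\big) + \beta(b_1+b_2)$ using Proposition~\ref{prop:2.4}(e) to rewrite $q_f(a_1+a_2) = q_f(a_1) + f(a_1)\cdot q_f(a_2)$ and similarly for $q_g$, and then reassemble it as $\varphi(a_1,c_1) + \varphi(a_2,c_2)$. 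This reassembly requires pushing $\beta(b_1)$ past a term of the form $\psi(\ldots)$, which is exactly where one invokes that $\alpha k$ and $\gamma l$ centralize each other \emph{together with} the compatibility $\beta(b)\cdot(-) $ coming from $\alpha$ and $\gamma$ being morphisms over $B$; the cancellativity of $D$ (condition~3.1(4)) is what lets us conclude once both sides are flanked by $\overline{\beta(b_1+b_2)} + \beta(b_1+b_2)$, exactly as in the converse direction of Theorem~\ref{thm:4.1}. In fact, the cleanest route is probably to verify directly that the triple $(\alpha,\beta,\gamma)$ satisfies conditions $(Ad1)$ and $(Ad2)$ of Theorem~\ref{thm:4.1}: $(Ad1)$ follows from cancellativity in $D$ once one checks the candidate $\varphi(a,c)$ solves the displayed equation, and $(Ad2)$ is precisely the homomorphism identity above, which the Huq-commutativity of $\alpha k$ and $\gamma l$ supplies. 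Then existence and uniqueness of $\varphi$ are handed to us by Theorem~\ref{thm:4.1} directly, and no separate verification of the homomorphism and conjugation axioms is needed.
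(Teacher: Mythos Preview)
Your proposal is correct, and the explicit formula you arrive at agrees with the paper's: writing $\psi(q_f(a),q_g(c))+\beta(b)=\alpha k q_f(a)+\gamma l q_g(c)+\gamma s g(c)=\alpha k q_f(a)+\gamma(c)$ recovers exactly the paper's definition $\varphi(a,c)=\alpha k q_f(a)+\gamma(c)$. The verification of $\varphi e_1=\alpha$, $\varphi e_2=\gamma$ is the same, and your sketch of the additivity argument (reduce to $\alpha k(b_1\cdot x)+\gamma(c_1)=\gamma(c_1)+\alpha k(x)$, then decompose $\gamma(c_1)=\gamma l q_g(c_1)+\alpha r(b_1)$, swap $\alpha k(b_1\cdot x)$ past $\gamma l q_g(c_1)$ by Huq-commutativity, and use Proposition~\ref{prop:2.4}(d)) is exactly what makes this work.

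The route genuinely differs from the paper's in two respects. First, the paper does not verify that $\varphi$ is a monoid homomorphism at all: it simply cites Proposition~6.2 of \cite{FM.17}, where the same result is proved for monoids with operations, and observes that only the conjugation axiom remains. Second, the paper then checks $\overline{\varphi(a,c)}=\varphi(\overline a,\overline c)$ by a direct computation using Lemma~5.1 and Proposition~\ref{prop:2.4}(d), rather than by appealing to Theorem~\ref{thm:4.1}. Your suggestion to verify $(Ad1)$ and $(Ad2)$ instead is a legitimate alternative and does buy something: once you have shown that the candidate $\varphi(a,c)$ solves the $(Ad1)$ equation (which, as you note, is a one-line consequence of axiom~3.1(2)) and that $(Ad2)$ holds (which is equivalent, by cancellation and Remark~3.2(i), to the additivity of $\varphi$ that you already need), Theorem~\ref{thm:4.1} delivers preservation of conjugation for free. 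So your route trades the paper's explicit conjugation calculation for a short reduction to the admissibility theorem; both are valid, and yours is arguably more self-contained since it does not rely on the external reference \cite{FM.17}.
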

\begin{proof}
  This relies on Proposition 6.2 in \cite{FM.17} where the result was proved for monoids with operations.
  This is not the case here because the unary operation given by the conjugation does not preserve
  addition. The morphisms $\alpha k$ and $\gamma l$ Huq-commute if, for all $x\in X$ and $y\in Y$,
$$\alpha k(x)+\gamma l(y)=\gamma l(y)+\alpha k(x).$$

  Then the morphism $\varphi:A\times_BC\rightarrow D$ is defined by
  $$\varphi(a,c)=\alpha kq_f(a)+\gamma(c),$$
  for all $a\in A$ and $c\in C$ such that $f(a)=g(c)=b$.
  Indeed,
$$
    \begin{array}{lll}
      \varphi(a,c) & = & \varphi(kq_f(a)+rf(a),c) \\
       & = & \varphi(kq_f(a),0)+\varphi(rf(a),c) \\
       & = & \varphi(kq_f(a),0)+\varphi(rg(c),c) \\
       & = & \alpha kq_f(a)+\gamma(c) \\
    \end{array}
 $$
 is such that $\varphi e_1=\alpha$ and $\varphi e_2=\gamma$. Furthermore, it was proved in 6.2 \cite{FM.17} that $ \varphi$ is a monoid homomorphism.
 It remains to prove that $\varphi$ preserves conjugation.

 For $(a,c)\in A\times_BC$,

$
   \begin{array}{lll}
     \varphi(\overline{a},\overline{c}) & = & \alpha kq_f(\overline{a})+\gamma(\overline{c}) \\
      & = & \alpha kq_f(\overline{kq_f(a)+rf(a)})+\gamma(\overline{c}) \\
      & = & \alpha kq_f(rf(\overline{a})+k\overline{q_f(a)})+\gamma(lq_g(\overline{c})+sg(\overline{c})) \\
      & = & \alpha kq_f(rf(\overline{a})+k\overline{q_f(a)})+\gamma lq_g(\overline{c})+\gamma sg(\overline{c}) \\
      & = & \gamma lq_g(\overline{c})+\alpha kq_f(rf(\overline{a})+k\overline{q_f(a)})+\alpha rf(\overline{a}),  (\ \gamma l \ and \ \alpha k \ commute)\\
      & = & \gamma lq_g(\overline{c})+\alpha(kq_f(rf(\overline{a})+k\overline{q_f(a)})+rf(\overline{a})) \\
      & = & \gamma lq_g(\overline{c})+\alpha(rf(\overline{a})+k\overline{q_f(a)}), \ (by \ \mbox{Prop.} 2.4 \ (d))\\
      & = & \gamma lq_g(\overline{c})+\alpha rf(\overline{a})+\alpha k\overline{q_f(a)} \\
      & = &  \gamma lq_g(\overline{c})+\gamma sg(\overline{c})+\alpha k\overline{q_f(a)}\\
      & = & \gamma(lq_g(\overline{c})+sg(\overline{c}))+\alpha k\overline{q_f(a)}, \ (\mbox{by Def. 2.3}) \\
      & = & \gamma(\overline{c})+ \alpha k\overline{q_f(a)}\\
   \end{array}
$

 and

 \hfill

$
   \begin{array}{lll}
    \overline{ \varphi(a,c)} & = & \overline{\alpha kq_f(a)+\gamma(c)} \\
      & = & \gamma(\overline{c})+ \alpha k\overline{q_f(a)}. \\
   \end{array}
 $

\end{proof}

Given two equivalence relations $R$ and $S$ on an object $X$
$$\vcenter{\xymatrix@=4em{R \ar@<1.0ex>[r]^-{r_1} \ar@<-1.0ex>[r]_-{r_2}  & X
\ar@<.0ex>[l]|-{i_R}
\ar@<-.0ex>[r]|-{i_S}
 & S \ar@<1.0ex>[l]^-{s_2} \ar@<-1.0ex>[l]_-{s_1} }},$$
they commute if and only the diagram
$$\vcenter{\xymatrix@=4em{R \ar@<.5ex>[r]^-{r_2} \ar[rd]_-{r_1} & X
\ar@<.5ex>[l]^-{i_R}
\ar@<-.5ex>[r]_-{i_S}
\ar@{=}[d] & S \ar@<-.5ex>[l]_-{s_1} \ar[ld]^-{s_2}\\
& X}}$$
is admissible. Then if $(r_2,i_R)$ and $(s_1,i_S)$ are Schreier split epimorphisms, by Proposition 7.1. we conclude that if $r_1 \ker (r_2)$ and $s_2 \ker (s_1)$ commute in the sense of Huq then the Schreier equivalence relations $R$ and $S$ commute. Since the converse is always true in a weakly Mal'tsev category, we conclude that $\mathcal{M}$ satisfies the following relative ``Smith is Huq" property:

\begin{theorem}
  In the category $\mathcal{M}$ of cancellative conjugation monoids two Schreier equivalence relations on an object commute if and only if their normalizations commute.
\end{theorem}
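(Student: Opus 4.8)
The plan is to deduce the theorem directly from Proposition 7.1 together with the standard weakly Mal'tsev observation that one direction of ``Smith is Huq'' is automatic. First I would recall the precise translation between the two sides: a pair of equivalence relations $R,S$ on an object $X$ of $\mathcal M$ commute in the sense of Smith--Pedicchio exactly when the canonical admissibility diagram built from the spans $R\rightrightarrows X\leftleftarrows S$ (with $r_2,i_R$ on one side and $s_1,i_S$ on the other, and $\beta$ the identity on $X$) admits the connector morphism $\varphi\colon R\times_X S\to X$; this is the content of the discussion just before the statement, and it is precisely the shape of diagram appearing in Proposition 7.1 once one sets $A=R$, $C=S$, $B=D=X$, $f=r_2$, $g=s_1$, $\alpha=r_1$, $\gamma=s_2$, $\beta=1_X$. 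The normalizations of $R$ and $S$ are the kernels $\ker(r_2)\hookrightarrow X$ and $\ker(s_1)\hookrightarrow X$, and ``their normalizations commute'' means exactly that $r_1\ker(r_2)$ and $s_2\ker(s_1)$ Huq-commute as subobjects of $X$.

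Next I would invoke the hypothesis built into the whole section, namely that we restrict to \emph{Schreier} equivalence relations, so that $(r_1,i_R)$, hence also $(r_2,i_R)$, and symmetrically $(s_1,i_S),(s_2,i_S)$, are Schreier split epimorphisms in $\mathcal M$. This is exactly the standing assumption that makes Proposition 7.1 applicable, because that proposition requires $(f,r)$ and $(g,s)$ to be Schreier split epimorphisms. So the forward implication goes: assume the normalizations commute in the sense of Huq; then $\alpha k=r_1\ker(r_2)$ and $\gamma l=s_2\ker(s_1)$ Huq-commute; Proposition 7.1 yields a (unique) $\varphi\colon R\times_X S\to X$ with $\varphi e_1=r_1$ and $\varphi e_2=s_2$; by the translation above this says precisely that $R$ and $S$ commute in the sense of Smith--Pedicchio. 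For the converse implication I would cite the general fact, valid in any weakly Mal'tsev category (and $\mathcal M$ is weakly Mal'tsev, as established in Section 3), that Smith-commuting equivalence relations always have Huq-commuting normalizations: restricting the connector $\varphi$ along the zero-complemented inclusions $\ker(r_2)\times\ker(s_1)\to R\times_X S$ produces the Huq cooperator, and uniqueness is free because $e_1,e_2$ are jointly epic. Combining the two directions gives the stated equivalence.

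The one genuine point to check, rather than cite, is that the Huq-commutation hypothesis of Proposition 7.1 is literally the statement ``the normalizations of $R$ and $S$ commute in the sense of Huq'': one must identify $\ker(r_2)$ with $i_R$ restricted appropriately and confirm that the cospan $\bigl(r_1\ker(r_2),\,s_2\ker(s_1)\bigr)$ into $X$ is the cospan of normalizations whose Huq-commutativity is meant. This is a diagram chase with kernels of Schreier split epimorphisms and uses Proposition 2.4(f) (exactness, so $k=\ker f$) to know that the relevant kernels are the honest normalizations. I expect this bookkeeping to be the only mild obstacle; everything substantive has been pushed into Proposition 7.1, whose own hard content (constructing $\varphi$ as a conjugation-preserving homomorphism under the Huq hypothesis) was already discharged. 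I would therefore present the proof as: translate both sides into admissibility of the connector diagram; apply Proposition 7.1 for ``Huq $\Rightarrow$ Smith''; use the weak Mal'tsev property for the automatic ``Smith $\Rightarrow$ Huq''; conclude.
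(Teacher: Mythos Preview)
Your proposal is correct and follows essentially the same route as the paper: instantiate Proposition~7.1 with $A=R$, $C=S$, $B=D=X$, $f=r_2$, $g=s_1$, $\alpha=r_1$, $\gamma=s_2$, $\beta=1_X$ to obtain Huq $\Rightarrow$ Smith, and invoke the weakly Mal'tsev property of $\mathcal{M}$ for the converse. The paper's argument is exactly this, stated in the paragraph immediately preceding the theorem; your additional remarks on identifying the normalizations with $r_1\ker(r_2)$ and $s_2\ker(s_1)$ simply spell out what the paper leaves implicit.
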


We consider now the case where just one of the split epimorphisms is a Schreier split epimorphism. The reason why we are interested in this case is to be able to arive at Proposition \ref{prop:7.4} and obtain an alternative process of showing that internal categories are the same as crossed semimodules.

\begin{proposition}\label{prop:6.3}
  Let $(f,r)$ be a Schreier split epimorphism with kernel $k$ and retraction $q$ in the category $\mathcal{M}$. Then for the diagram
  \[ \xymatrix{ & &  & \\
& A \times_B C \ar@<-2pt>[d]_{p_1} \ar@<-2pt>[r]_-{p_2}
& C \ar@<-2pt>[l]_-{e_2} \ar@<-2pt>[d]_g  \ar@/^/[ddr]^{\gamma} \\
X \ar@<-2pt>[r]_k & A \ar@<-2pt>[u]_{e_1} \ar@<-2pt>[r]_f
\ar@<-2pt>@{.>}[l]_{q} \ar@/_/[drr]_{\alpha} & B \ar@<-2pt>[l]_r
\ar@<-2pt>[u]_s \ar[dr]^{\beta} & \\
& & & D, } \]
with $fr=gs=1_B$, $\alpha r=\gamma s=\beta$,  the following conditions are equivalent:

\begin{enumerate}[(i)]
  \item There exists a morphism $\varphi:A\times_BC\rightarrow D$ such that $\varphi e_1=\alpha$ and $\varphi e_2=\gamma$.
  \item There exists a morphism $\varphi:A\times_BC\rightarrow D$ such that $\varphi \langle k,0\rangle=\alpha k$ and $\varphi e_2=\gamma$.
  \item For all $x\in X$ and $c\in C$, $\alpha k(g(c)\cdot x)+\gamma(c)=\gamma(c)+\alpha k(x)$.
\end{enumerate}
\end{proposition}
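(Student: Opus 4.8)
The plan is to prove the chain of equivalences $(i)\Rightarrow(ii)\Rightarrow(iii)\Rightarrow(i)$, using the Schreier property of $(f,r)$ to convert statements about all of $A\times_B C$ into statements involving only the kernel $X$ and $C$.

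First I would observe that $(i)\Rightarrow(ii)$ is immediate: if $\varphi$ satisfies $\varphi e_1=\alpha$, then precomposing with $k$ and using $e_1 k=\langle k, sfk\rangle=\langle k,0\rangle$ (since $fk=0$ and $s(0)=0$) gives $\varphi\langle k,0\rangle=\alpha k$. For $(ii)\Rightarrow(iii)$, I would take $(k(x),0)$ and $(rg(c),c)=e_2(c)$ in $A\times_B C$ (both have first coordinate mapping under $f$ to $0$ and $g(c)$ respectively, and indeed $f(rg(c))=g(c)=g(c)$ so the pair $(rg(c),c)$ lies in the pullback). Computing the product $(k(x),0)+(rg(c),c)$ and $(rg(c),c)+(k(x),0)$ inside $A\times_B C$ and applying the homomorphism $\varphi$, I get on one side $\varphi\langle k,0\rangle(x)+\varphi e_2(c)=\alpha k(x)+\gamma(c)$; the other order requires rewriting $(rg(c)+k(x))$ using Proposition~\ref{prop:2.4}(d), namely $kq(rg(c)+k(x))+rg(c)=rg(c)+k(x)$, i.e. $k(g(c)\cdot x)+rg(c)=rg(c)+k(x)$, so that $(rg(c),c)+(k(x),0)=(rg(c)+k(x),c)=(k(g(c)\cdot x)+rg(c),c)=(k(g(c)\cdot x),0)+(rg(c),c)$, and applying $\varphi$ yields $\alpha k(g(c)\cdot x)+\gamma(c)$. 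Commutativity of these two products in $A\times_B C$ (they are equal as elements, since addition in the pullback is componentwise and in the first coordinate we already showed both equal $rg(c)+k(x)$) forces $(iii)$. Wait — I should be careful: the two products are literally equal in $A\times_B C$, so $(iii)$ comes out for free once we know $\varphi$ is a homomorphism and expand correctly; the content is really in the bookkeeping with Proposition~\ref{prop:2.4}(d) and the Schreier decomposition.

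The substantive implication is $(iii)\Rightarrow(i)$, and here I would \emph{define} $\varphi\colon A\times_B C\to D$ by $\varphi(a,c)=\alpha k q(a)+\gamma(c)$ whenever $f(a)=g(c)=b$, exactly as in Proposition~7.1, and then verify: (a) $\varphi e_1=\alpha$, using $qk$-type identities — $\varphi e_1(a)=\varphi(a,sf(a))=\alpha kq(a)+\gamma sf(a)=\alpha kq(a)+\beta f(a)=\alpha kq(a)+\alpha rf(a)=\alpha(kq(a)+rf(a))=\alpha(a)$; (b) $\varphi e_2=\gamma$, using $qr=0$ from Proposition~\ref{prop:2.4}(b): $\varphi e_2(c)=\varphi(rg(c),c)=\alpha k q(rg(c))+\gamma(c)=\alpha k(0)+\gamma(c)=\gamma(c)$; (c) $\varphi$ preserves addition, which is where condition $(iii)$ enters — expanding $\varphi((a,c)+(a',c'))=\alpha k q(a+a')+\gamma(c+c')$ via Proposition~\ref{prop:2.4}(e) $q(a+a')=q(a)+q(rf(a)+kq(a'))=q(a)+f(a)\cdot q(a')$, one gets $\alpha k(q(a)+f(a)\cdot q(a'))+\gamma(c)+\gamma(c')=\alpha kq(a)+\alpha k(f(a)\cdot q(a'))+\gamma(c)+\gamma(c')$, while $\varphi(a,c)+\varphi(a',c')=\alpha kq(a)+\gamma(c)+\alpha kq(a')+\gamma(c')$, and matching the middle terms requires exactly $\gamma(c)+\alpha kq(a')=\alpha k(f(a)\cdot q(a'))+\gamma(c)$ — but $f(a)=g(c)$, so this is $(iii)$ applied with $x=q(a')$ and the element $c$; (d) $\varphi$ preserves conjugation, which I would handle by the same computation as in the proof of Proposition~7.1, noting that the Huq-commutation of $\alpha k$ and $\gamma l$ used there is replaced by $(iii)$ in the form needed (the relevant cross-terms involve $\gamma(\bar c)$ and $\alpha k\overline{q(a)}$, and one uses $(iii)$ together with the conjugation formula for $q$ from Lemma~5.1 and Remark~3.2).

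I expect the conjugation-preservation step (d) to be the main obstacle, since it is the point where the paper already had to work around the fact that conjugation is not additive; the trick is to mirror the displayed computation in Proposition~7.1 but substitute condition $(iii)$ for the Huq-commutation hypothesis at the single place it is invoked, checking that $(iii)$ supplies precisely the needed commutation of $\gamma(c)$-type and $\alpha k(x)$-type terms after the Schreier decomposition of $\bar a$ via Lemma~5.1. Everything else is a direct bookkeeping exercise with Proposition~\ref{prop:2.4}(a)--(e).
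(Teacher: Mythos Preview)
Your proof is correct and follows essentially the same route as the paper, with two minor organizational differences worth noting.

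First, for $(ii)\Rightarrow(i)$ the paper does not pass through $(iii)$ at all: it invokes the fact that for a Schreier split epimorphism the pair $(k,r)$ is jointly strongly epimorphic (cf.\ \cite{BFMS.13}, 2.1.6), so from $\varphi e_1 k=\alpha k$ and $\varphi e_1 r=\varphi e_2 s=\gamma s=\alpha r$ one concludes $\varphi e_1=\alpha$ immediately. Your detour $(ii)\Rightarrow(iii)\Rightarrow(i)$ is perfectly valid, but the paper's argument is shorter and more conceptual for that implication.

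Second, for the conjugation-preservation step you plan to imitate the long computation of Proposition~7.1, but the paper does something quicker: writing $\overline{\varphi(a,c)}=\gamma(\overline c)+\alpha k\,\overline{q(a)}$ and, via Lemma~5.1, $\varphi(\overline a,\overline c)=\alpha k\bigl(f(\overline a)\cdot\overline{q(a)}\bigr)+\gamma(\overline c)$, the required equality is literally $(iii)$ applied to $\overline c$ and $x=\overline{q(a)}$ (using $f(\overline a)=g(\overline c)$). So the conjugation check is a one-line instance of $(iii)$ rather than a separate computation; you may want to adopt this shortcut.
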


\begin{proof}
$(i)\Rightarrow (ii)$ is obvious. To prove the converse we observe that since $(f,r)$ is a Schreier split epimorphism then $(k,r)$ is a jointly strongly epimorphic pair (\cite{BFMS.13}, 2.1.6) and so, since $\varphi e_1k=\varphi\langle k,0>=\alpha k$ and $\varphi e_1r=\varphi e_2s=\gamma s=\alpha r$ then $\varphi e_1=\alpha$. And $\varphi e_2=\gamma$.

The morphism $\varphi$, if it exists, has to be defined by $$\varphi(a,c)=\alpha kq(a)+\gamma(c)$$

because

\hfill

$
  \begin{array}{lll}
    \varphi(a,c) & = & \varphi(kq(a)+rf(a),c) \\
     & = & \varphi(kq(a),0)+\varphi(rf(a),c) \\
     & = & \varphi(kq(a),0)+\varphi(rg(c),c) \\
     & = & \alpha kq(a)+\gamma (c). \\
  \end{array}
$

Then
$\varphi(a_1,c_1)+\varphi(a_2,c_2)=\alpha kq(a_1)+\gamma(c_1)+\alpha kq(a_2)+\gamma(c_2)$
and
$
  \begin{array}{lll}
    \varphi(a_1+a_2,c_1+c_2)  & = & \alpha k(q(a_1)+q(rf(a_1)+kq(a_2)))+\gamma(c_1+c_2) \\
     & = & \alpha kq(a_1)+\alpha kq(rg(c_1)+kq(a_2))+\gamma(c_1)+\gamma(c_2). \\
  \end{array}
$
Thus $\varphi(a_1+a_2,c_1+c_2)=\varphi(a_1,c_1)+\varphi(a_2,c_2)$ if and only if
$$\alpha kq(rg(c_1)+kq(a_2))+\gamma(c_1)=\gamma(c_1)+\alpha kq(a_2),$$
since $ q(rg(c1) + kq(a_2) = g(c_1) \cdot  q(a_2)$,
that is if and only if $(iii)$ holds.

\hfill

We have that $(iii)\Leftrightarrow (i)$ because $\varphi$ also preserves conjugation:

the fact that $\overline{\varphi(a,c)}=\varphi(\overline{a},\overline{c})$ is equivalent to the identity

$$\alpha k(\overline{f(a)}\cdot \overline{q(a)})+\gamma(\overline{c})=\gamma(\overline{c})+\alpha k\overline{q(a)},$$
thanks to Lemma 5.1,
that is to $(iii)$ for $f(\overline{a})=g(\overline{c})$ and $x=\overline{q(a)}$.
\end{proof}

Finally, if $A=C$ and $r=s$ we are in the case considered in Proposition \ref{prop:5.3} (where $g$ was denoted by $\tilde{h}$) that is we have a reflexive graph
$$\vcenter{\xymatrix@=4em{X \ar@<0.0ex>[r]_-{k}   & X_1
\ar@{-->}@<-1.0ex>[l]_-{q}
\ar@<1.0ex>[r]^-{f} \ar@<-1.0ex>[r]_-{g}
 & X_0 \ar@<0.0ex>[l]|-{r}  }}$$
with $(f,r)$ a Schreier split epimorphism, that is induced by $h=gk$.

Then, taking $ c=k(y)$ in $(iii)$ we obtain
$$(iii)' \ \alpha k(h(y)\cdot x)+\gamma k(y)=\gamma k(y)+\alpha k(x)$$
for all $x,y\in X$ and so $\alpha k$ and $\gamma k$ ``Huq-commute" up to the action of $h(y)$ on $x$.

Conversely, if $(iii)'$ holds then, since $c=kq(c)+fr(c)$,

$
   \begin{array}{lll}
     \alpha k(g(c)\cdot x)+\gamma(c) & = & \alpha k(g(kq(c)+fr(c))\cdot x)+\gamma(kq(c)+fr(c)) \\
      & = & \alpha k(hq(c)\cdot (f(c)\cdot x))+\gamma kq(c)+\gamma rf(c) \\
      & = & \gamma k(q(c))+\alpha k(f(c)\cdot x)+\alpha rf(c) \\ %(by \ (iii)' \with \ q(c)=y; \alpha r=\gamma r)
      & = & \gamma k(q(c))+\alpha(k(f(c)\cdot x)+rf(c)) \\
      & = & \gamma k(q(c))+\alpha(rf(c)+k(x)) \ (by \ 2.4 \ (d)) \\
      & = & \gamma k(q(c))+\gamma rf(c)+\alpha k(x) \ (\alpha r=\gamma r) \\
      & = & \gamma(kq(c)+rf(c))+\alpha k(x) \\
      & = & \gamma(c)+\alpha k(x). \\
   \end{array}
$

\hfill

Thus we proved the following:

\begin{proposition}\label{prop:7.4}
  If, in Proposition \ref{prop:6.3}, $A=C$ and $s=r$ then the diagram
  \begin{equation}\label{diag:1}
   \xymatrix{ & &  & \\
& A \times_B A \ar@<-2pt>[d]_{p_1} \ar@<-2pt>[r]_-{p_2}
& A \ar@<-2pt>[l]_-{e_2} \ar@<-2pt>[d]_g  \ar@/^/[ddr]^{\gamma} \\
X \ar@<-2pt>[r]_k & A \ar@<-2pt>[u]_{e_1} \ar@<-2pt>[r]_f
\ar@<-2pt>@{.>}[l]_{q_f} \ar@/_/[drr]_{\alpha} & B \ar@<-2pt>[l]_r
\ar@<-2pt>[u]_r \ar[dr]^{\beta} & \\
& & & D, } \end{equation}
is admissible if and only if
$$\alpha k(h(y)\cdot x)+\gamma k(y)=\gamma k(y)+\alpha k(x), \ for \  all \ x,y\in X.$$

\end{proposition}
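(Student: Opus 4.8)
The plan is to derive this as a special case of Proposition~\ref{prop:6.3} by setting $A = C$, $g = \tilde{h}$ (the codomain map of the reflexive graph induced by $h = gk$), and $s = r$. Under these identifications, condition (iii) of Proposition~\ref{prop:6.3} reads: for all $x \in X$ and all $c \in A$,
\[
\alpha k\bigl(g(c)\cdot x\bigr) + \gamma(c) = \gamma(c) + \alpha k(x),
\]
while the claimed condition is the ostensibly weaker statement obtained by restricting $c$ to lie in the kernel, i.e. $c = k(y)$, using $g k = h$ to rewrite $g(k(y)) = h(y)$. So the whole content is the equivalence between condition (iii) of Proposition~\ref{prop:6.3} and its restriction (iii)$'$ to $c \in k(X)$.

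First I would note that one direction, (iii) $\Rightarrow$ (iii)$'$, is trivial: just substitute $c = k(y)$. The substance is the converse, (iii)$'$ $\Rightarrow$ (iii). Here I would use the Schreier decomposition $c = k q_f(c) + r f(c)$ available because $(f,r)$ is a Schreier split epimorphism, write $g(c) = g(kq_f(c)) + g(rf(c)) = h(q_f(c)) + f(c)$ (using $gk = h$ and $gr = 1_B$, the latter from the reflexive graph axiom), and then expand $\alpha k(g(c)\cdot x) + \gamma(c)$ step by step. The key algebraic moves are: split $\gamma(c) = \gamma k q_f(c) + \gamma r f(c)$ using that $\gamma$ is a homomorphism; apply (iii)$'$ with $y = q_f(c)$ to commute $\alpha k(h(q_f(c))\cdot(f(c)\cdot x))$ past $\gamma k q_f(c)$; use $\alpha r = \gamma r$ (from $\alpha r = \beta = \gamma s = \gamma r$) to convert $\gamma r f(c)$ into $\alpha r f(c)$ and recombine; and finally invoke Proposition~\ref{prop:2.4}(d), $k(b\cdot x) + r(b) = r(b) + k(x)$ with $b = f(c)$, to perform the decisive commutation $\alpha(k(f(c)\cdot x) + rf(c)) = \alpha(rf(c) + k(x))$. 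Reassembling gives $\gamma(c) + \alpha k(x)$, exactly (iii). This chain of equalities is already carried out explicitly in the paragraph immediately preceding the proposition statement, so the proof proper can be quite short — essentially "apply Proposition~\ref{prop:6.3} with $A = C$, $s = r$, and observe that (iii) is equivalent to (iii)$'$ by the computation above."

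The main obstacle — which is really a bookkeeping obstacle rather than a deep one — is making sure the identifications are consistent: in Proposition~\ref{prop:5.3} the codomain map of the reflexive graph was called $\tilde{h}$ with $\tilde{h}(a) = hq(a) + f(a)$ and $\tilde{h}k = h$, whereas in the diagram~(\ref{diag:1}) it is called $g$, and one must check that the action-by-$h(y)$ appearing in (iii)$'$ matches the action $\varphi \colon B \to \mathrm{End}(X)$ associated to the original Schreier split epimorphism $(f,r)$, not to some other split epimorphism. Since $h = gk$ and the Schreier structure $(k, q_f, f, r)$ on the bottom row is fixed, the action $b \cdot x$ throughout is the one coming from $q$, so there is no ambiguity. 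One should also confirm that nothing in Proposition~\ref{prop:6.3} required $A \neq C$ or $s \neq r$, which is clear from its proof. I would therefore write:

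\begin{proof}
Apply Proposition~\ref{prop:6.3} with $C = A$, $s = r$, and $g = \tilde h$ the codomain morphism of the reflexive graph induced by $h = gk$ as in Proposition~\ref{prop:5.3}. By Proposition~\ref{prop:6.3}, the diagram~(\ref{diag:1}) is admissible if and only if condition (iii) of that proposition holds, namely
\[
\alpha k\bigl(g(c)\cdot x\bigr) + \gamma(c) = \gamma(c) + \alpha k(x)
\]
for all $x \in X$ and $c \in A$. Taking $c = k(y)$ and using $gk = h$ gives
\[
(iii)' \qquad \alpha k\bigl(h(y)\cdot x\bigr) + \gamma k(y) = \gamma k(y) + \alpha k(x), \qquad x, y \in X,
\]
so (iii) implies the stated condition. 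For the converse, assume (iii)$'$. Given $c \in A$, write $c = kq(c) + rf(c)$ and compute as in the paragraph preceding this proposition: expanding $g(c) = h(q(c)) + f(c)$ and $\gamma(c) = \gamma k q(c) + \gamma r f(c)$, applying (iii)$'$ with $y = q(c)$, using $\alpha r = \gamma r$ and Proposition~\ref{prop:2.4}(d) with $b = f(c)$, one obtains
\[
\alpha k\bigl(g(c)\cdot x\bigr) + \gamma(c) = \gamma(c) + \alpha k(x).
\]
Hence (iii)$'$ implies (iii), and by Proposition~\ref{prop:6.3} the diagram~(\ref{diag:1}) is admissible if and only if (iii)$'$ holds.
\end{proof}
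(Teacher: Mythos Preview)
Your proposal is correct and follows exactly the paper's own argument: the proof of Proposition~\ref{prop:7.4} in the paper is precisely the computation displayed in the paragraph immediately preceding its statement (there is no separate \texttt{proof} environment), reducing admissibility to condition (iii) of Proposition~\ref{prop:6.3} and then showing (iii) $\Leftrightarrow$ (iii)$'$ via the Schreier decomposition $c=kq(c)+rf(c)$, the identity $\alpha r=\gamma r$, and Proposition~\ref{prop:2.4}(d). Your write-up is a faithful and slightly more explicit restatement of that same chain of equalities.
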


An even more particular case of the previous proposition can be related with the condition for a crossed semimodule thus giving an alternative explanation on how does the local behaviour of the kernel $X$  can be extended to the global behaviour of $A$.

Indeed, if in Proposition \ref{prop:6.3} we take $A=C$, $s=r$, $\alpha=1$, $\beta=r$ and $\gamma=1$ then the admissibility of the diagram (\ref{diag:1})
is equivalent to the condition of $h=g k$ being a crossed semimodule, that is
$$h(y)\cdot x+y=y+x, \ for \  all \ x,y\in X.$$

\section{Examples}

In this section we give a non-trivial example of an internal category which is not an internal groupoid. The example is based on quaternions which have been used as a model for the notion of cancellative conjugation monoid.

The set $\mathbb{H}\setminus \{0\}$ of non-zero quaternions is a non-commutative group for the usual multiplication. It is cancellative and has conjugation: for $q=a+bi+cj+dk$ the conjugate is $\overline{q}=a-bi-cj-dk$.

We recall that the norm of $q$ is given by $\|q\|=\sqrt{q\overline{q}}=\sqrt{a^2+b^2+c^2+d^2}$, it is multiplicative, $\|pq\|=\|p\|\|q\|$, and $q^{-1}=\frac{\overline{q}}{\|q\|^2}$ for $q\not=0$,

\hfill

The sets $B=\{q\in \mathbb{H}\mid\|q\|=1\}$ and $X=\{q\in \mathbb{H}\mid0<\|q\|\leq 1\}$ are a conjugation group ($q^{-1}=\overline{q}$) and a conjugation monoid, respectively, with cancellation.

\hfill

We are going to construct a Schreier split epimorphism in $\mathcal{M}$. For that we consider the monoid action $\varphi$ of $B$ on $X$ defined by $b\cdot x=bxb^{-1}=bx\overline{b}$, the semidirect product $X\rtimes_\varphi B$ that is the monoid with underlying set $X\times B$ and operation
$$(x_1,b_1)(x_2,b_2)=(x_1(b_1\cdot x_2),b_1b_2)$$
and the Schreier split epimorphism of monoids
$$\xymatrix{ X \ar@<-2pt>[r]_-{\langle 1,0\rangle} & X\rtimes_\varphi B \ar@<-2pt>@{.>}[l]_-{\pi_1}
\ar@<-2pt>[r]_-{\pi_2} & B \ar@<-2pt>[l]_-{\langle 0,1\rangle }}$$

As a consequence of Proposition \ref{prop:5.2}, we obtain a Schreier split epimorphism in $\mathcal{M}$  provided we take $\overline{(x,b)}=(\overline{b}\cdot \overline{x},\overline{b})$. It is interesting to observe the details in verifying the axioms for a cancellative conjugation monoids.
Indeed,

\begin{enumerate}
  \item $(x,b)\overline{(x,b)}=\overline{(x,b)}(x,b)$

$
  \begin{array}{lll}
    (x,b)\overline{(x,b)} & = & (x,b)(\overline{b}\cdot \overline{x},\overline{b}) \\
     & = & (x(b\cdot (\overline{b}\cdot \overline{x})),b\overline{b}) \\
     & = & (x(b\overline{b}\cdot \overline{x}),b\overline{b}) \\
     & = & (x\overline{x},1) \ (because \ \overline{b}=b^{-1}) \\
  \end{array}
$

\hfill

$
  \begin{array}{lll}
    \overline{(x,b)}(x,b) & = & (\overline{b}\cdot \overline{x},\overline{b})(x,b) \\
     & = & ((\overline{b}\cdot \overline{x})(\overline{b}\cdot x),\overline{b}b) \\
     & = & (\overline{b}\cdot \overline{x}x,\overline{b}b) \\
     & = & (\overline{x}x,1) \\
  \end{array}
$

because the center of $\mathbb{H}\setminus \{0\}$ is $\mathbb{R}$ and so $\overline{b}(\overline{x}x)\overline{b}^{-1}=\overline{b}(\overline{x}x)b=\overline{b}b(\overline{x}x)=\overline{x}x.$
And $x\overline{x}=\overline{x}x$.

\vspace{.3cm}
\item $(x,b)\overline{(y,c)}(y,c)=(y,c)\overline{(y,c)}(x,b)$

$
    \begin{array}{lll}
      (x,b)\overline{(y,c)}(y,c) & = & (x,b)(\overline{y}y,1) \\
       & = & (x(b\cdot \overline{y}y),b) \\
       & = & (x\overline{y}y,b) \ (because \ \overline{y}y \ is \ in \ the \ center \  of \ \mathbb{H})\\
    \end{array}
 $

 \hfill

$
   \begin{array}{lll}
     (y,c)\overline{(y,c)}(x,b) & = & (y\overline{y},1)(x,b) \\
      & = & (y\overline{y}(1\cdot x),b) \\
      & = & (y\overline{y}x,b) \\
   \end{array}
$

and $x\overline{y}y=y\overline{y}x$ in $\mathbb{H}$.

\vspace{.3cm}

\item  $\overline{(x,b)(y,c)}=\overline{(y,c)} \; \overline{(x,b)}$

  $
    \begin{array}{lll}
      \overline{(x,b)(y,c)} & = & \overline{(x(b\cdot y),bc)} \\
       & = & (\overline{bc}\cdot \overline{(x(b\cdot y))},\overline{bc}) \\
    \end{array}
$

\hfill

$
  \begin{array}{lll}
    \overline{(y,c)} \; \overline{(x,b)} & = & (\overline{c}\cdot \overline{y},\overline{c})(\overline{b}\cdot \overline{x},\overline{b}) \\
     & = & ((\overline{c}\cdot \overline{y})(\overline{c}\cdot (\overline{b}\cdot \overline{x})),\overline{c}\overline{b}) \\
  \end{array}
$

 and

 \hfill

$
  \begin{array}{lll}
 \overline{bc}\cdot \overline{(x(b\cdot y))} & = & \overline{c}\overline{b}\cdot (\overline{(b\cdot y)}\overline{x})\\
      & = & \overline{c}\overline{b}\cdot (\overline{(by\overline{b})}\overline{x}) \\
     & = & \overline{c}\overline{b}\cdot (b\overline{y}\overline{b}\overline{x}) \\
     & = & \overline{c}\overline{b} b \overline{y}\overline{b}\overline{x} bc \ ((\overline{c}\overline{b})^{-1}=bc)\\
      & = & \overline{c}\overline{y}\overline{b}\overline{x}bc  \ (\overline{b}b=1) \\
  \end{array}
$

\hfill

$
  \begin{array}{lll}
    (\overline{c}\cdot \overline{y})(\overline{c}\cdot (\overline{b}\cdot \overline{x})) & = & \overline{c}\cdot (\overline{y}(\overline{b}\cdot \overline{x})) \\
     & = & \overline{c}\cdot (\overline{y}\overline{b}\overline{x}b) \ (since \ b^{-1}=\overline{b}) \\
     & = & \overline{c}\; \overline{y}\overline{b}\overline{x}bc \\
  \end{array}
$

and so (3) holds.
\end{enumerate}

\hfill

So $X\rtimes_\varphi B$ is an $\mathcal{M}$-object and it gives rise to the Schreier split epimorphism in this category associated with the action $\varphi$.

We define $h\colon{X\rightarrow B}$ by $h(x)=\frac{x}{\|x\|}$, which gives a monoid homomorphism (because the norm is multiplicative) that, furthermore, preserves conjugation.

The morphism $h$ induces an internal category in the sense of Proposition \ref{prop:5.4} because it satisfies condition $(C_1)$ and $(C_2)$:

$$h(b\cdot x)b=\frac{bxb^{-1}}{\|x\|}b=b\frac{x}{\|x\|}=bh(x),$$

and

$$(h(y)\cdot x)y=\frac{y}{\|y\|}x\left(\frac{y}{\|y\|}\right)^{-1}y = yx.$$.

But $h$ does not induce an internal groupoid, because $X$ is not a group.

\hfil

Summing up:

\begin{example}\label{eg:8.1}
  Given $X$ and $B$ as above, the action $\varphi$ of $B$ on $X$ defined by $\varphi(b)(x)=bxb^{-1}$ gives rise to a Schreier split epimorphism
  $$\xymatrix{ X \ar@<-2pt>[r]_-{\langle 1,0\rangle} & X\rtimes_\varphi B \ar@<-2pt>@{.>}[l]_-{\pi_1}
\ar@<-2pt>[r]_-{\pi_2} & B \ar@<-2pt>[l]_-{\langle 0,1\rangle }}$$
in $\mathcal{M}$, with $\overline{(x,b)}=(\overline{b}\cdot \overline{x},\overline{b})$ in the semidirect product. Then $h\colon{X\rightarrow B}$, defined by $h(x)=\frac{x}{\|x\|}$, is a morphism in $\mathcal{M}$ that induces an internal category  but not an internal groupoid since $X$ is not a group, in the sense of  Propositions \ref{prop:5.4} and \ref{prop:5.5}, respectively.

\end{example}

In other words we say that the pair $(h,\varphi)$ consisting of the morphism $h$ and the action $\varphi$ defined above is a crossed semimodule but not a crossed module. In order to give an example of a precrossed semimodule which is not a crossed semimodule we can simply take $B = 0$ and $h$ the unique morphism $X \to 0=B$. Indeed, $(h,\varphi)$ satisfies $(C_1)$ but not $(C_2)$ because $X$ is not commutative.

Example \ref{eg:8.1} has a geometrical interpretation. In order to see it we will consider the particular case of complex numbers instead of quaternions. In that case, $B$ is the unit circle and $X$ is the unitary disk with its center removed. The action is trivial and the morphism $h\colon{X\to B}$, defined as $x\mapsto \frac{x}{\|x\|}$, projects a point on the disk, radially, into its boundary. We may now form a category whose objects are the points on the unit circle and with arrows the pairs $(x,b)\in X\times B$. The morphisms in this category have an interesting and intuitive interpretation. Indeed, an arrow in this category, being a pair $(x,b)\in X\times B$, let us say of the form $x=\alpha e^{i\theta_1}$, $b=e^{i\theta_2}$, with $i^2=-1$ the imaginary unit, $\alpha\in]0,1]$ and $\theta_1,\theta_2\in]-\pi,\pi]$, can be interpreted as an arc of circumference (centred at the origin in the complex plane) with radio $\alpha$, starting at angle $\theta_2$  and ending at angle $\theta_1+\theta_2 (mod \ 2\pi)$. If $(x',b')$ and $(x,b)$ are interpreted as two morphisms, then they can be composed when $b'=\frac{x}{\|x\|}b$ and its composition is the pair $(x'x, b)$. It is now clear that not every morphism has an inverse. For example, if $b=1$ and $x=\frac{i}{2}$,  then the inverse to the arrow $(x,b)$ would be $(-2i,i)$, which does not belong to the set $X\times B$.

\section*{Acknowledgements}
 This work was partially supported by Funda\c{c}\~{a}o para a Ci\^{e}ncia e a Tecnologia (FCT) via: (CDRSP--UID/Multi/04044/2019) and (CMUC -- UID/MAT/00324/2019); PAMI - ROTEIRO/0328/2013 (Nº 022158);  Next.parts (17963); Centro2020; CDRSP and ESTG from the Polytechnic Institute of Leiria, Centro de Matem\'{a}tica da Universidade de Coimbra, Faculdade de Ci\^{e}ncias e Tecnologia da Universidade dos A\c{c}ores.

\end{document}